\newtheorem{thm}{Theorem}
\newtheorem{lemma}{Lemma}
\titleformat{\section}{\normalfont\fontsize{14}{16}\bfseries}{\thesection}{1em}{}
\titleformat{\subsection}{\normalfont\fontsize{12}{14}\bfseries}{\thesubsection}{1em}{}
\titlespacing*{\section}{0pt}{12pt plus 4pt minus 2pt}{6pt plus 2pt minus 2pt}
\titlespacing*{\subsection}{0pt}{8pt plus 2pt minus 2pt}{4pt plus 2pt minus 2pt}
\begin{document}

\begin{center}
{\rmfamily\fontsize{18}{22}\selectfont\textbf{Weak Mixing Property for\\ Linear Involutions}}
\end{center}

\begin{center}
\rmfamily\textbf{Erick Gordillo}

Universität Heidelberg\\
egordillo@mathi.uni-heidelberg.de
\end{center}

\begin{center}
\rmfamily\textbf{\textit{Abstract}}
\end{center}

\doublespacing
\noindent
In this work, we extend the celebrated result of Avila--Forni~\cite{avila2007weak}  on the weak mixing property of interval exchange transformations to the setting of linear involutions, which naturally arise from the study of vertical foliations on half-translation surfaces. Using recent advances on the Kontsevich--Zorich cocycle for quadratic differentials~\cite{belldiagonal, gutierrez2019classification, trevino2013non}, we establish that, for every dynamically irreducible generalized permutation, the associated linear involution is weakly mixing for almost every admissible parameter.
\medskip
\noindent

\textit{Key Words: Linear involution, non-orientable foliation, flat surface, weak mixing.}

\singlespacing
\section*{Introduction}
Interval exchange transformations (IETs) are a class of dynamical systems that naturally arise from the study of translation flows on translation surfaces. Given a translation surface and a chosen direction, one can take a transversal segment to the corresponding translation flow and consider the first return map. This map divides the segment into finitely many subintervals (say $d$), and acts on each as a translation. An IET is completely determined by a permutation $\pi$, describing the combinatorial rearrangement of the intervals, and a vector $\lambda\in \mathbb{R}^d_+$, whose coordinates give the lengths of the subintervals.

Over the past  decades, IETs have become a central object of study in ergodic theory and Teichmüller dynamics, one of the most relevant results deals with spectral properties of this objects, namely

\begin{thm} \cite{avila2007weak}
    For an irreducible permutation $\pi$ on an alphabet of $d$ letters  which is not a rotation, and for almost any parameter $\lambda\in \mathbb{R}_+^d$, the interval exchange transformation $T(\pi,\lambda)$ is weakly mixing.
\end{thm}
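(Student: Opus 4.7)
The plan is to apply Veech's spectral criterion: an ergodic IET is weakly mixing if and only if it admits no non-constant measurable eigenfunction. Since Masur and Veech independently established that $T(\lambda,\pi)$ is uniquely ergodic (and hence ergodic) for every irreducible $\pi$ and a.e.\ $\lambda\in\mathbb{R}^d_+$, the theorem reduces to the following statement: for a.e.\ $\lambda$ and every $t\in\mathbb{R}$, if $\phi\colon I\to\mathbb{C}$ is measurable and $\phi\circ T = e^{2\pi i t}\phi$ almost everywhere, then $\phi$ is essentially constant and $t\in\mathbb{Z}$.

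I would split the candidate eigenvalues $t$ into rational and irrational regimes. For rational $t$, the hypothesis that $\pi$ is not a rotation allows one to upgrade any measurable eigenfunction with rational frequency to a non-trivial continuous one (by averaging over the finite orbit of the rotation induced on $\mathbb{Z}/q\mathbb{Z}$), which is ruled out for a.e.\ $\lambda$ by Veech's topological weak mixing results refined by Nogueira--Rudolph. This step is comparatively soft and leans on the combinatorial assumption on $\pi$.

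The bulk of the work lies in the irrational case, which I would attack through Rauzy--Veech renormalization and the associated Kontsevich--Zorich cocycle $B_n(\lambda,\pi)$ acting on $\mathbb{R}^d$. Iterating the eigenvalue equation along the Rauzy--Veech orbit translates "$t$ is an eigenvalue" into the arithmetic condition that the vectors $t\, B_n^T\lambda$ stay exponentially close to $\mathbb{Z}^d$ for a positive density of times $n$. The goal is to show that, for a.e.\ $\lambda$, no irrational $t$ can satisfy this.

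The main obstacle, and the genuine innovation of Avila--Forni, is the quantitative exclusion argument needed here. One must exploit the non-uniform hyperbolicity of the KZ cocycle — in particular the strict gap between the first and second Lyapunov exponents provided by Forni's theorem and sharpened by Avila--Viana — to produce, at each large Rauzy time, a strong contraction along directions transverse to the tautological direction $\lambda$. Combining this contraction with a Borel--Cantelli scheme applied to the pair $(\lambda,t)$ via Fubini forces the set of $\lambda$ admitting some irrational eigenvalue to have Lebesgue measure zero. Together with the rational case and unique ergodicity, this yields weak mixing via Veech's criterion; the delicate part I would expect to spend the most time on is making the transverse contraction quantitatively compatible with the Diophantine closeness condition on $t$.
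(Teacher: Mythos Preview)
This theorem is merely cited in the paper, not proved; the paper only summarizes the Avila--Forni strategy and then adapts it to linear involutions. Comparing your outline to that summary (and to the structure reproduced in the proof of Theorem~\ref{theorem2}), the broad skeleton is right: Veech's criterion, a separate treatment of rational frequencies via Nogueira--Rudolph, and the use of the hyperbolicity of the Kontsevich--Zorich cocycle for the remaining case. Two points, however, diverge from the actual argument in ways that matter.

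First, the arithmetic reformulation is misstated. The Veech criterion does not produce the condition that $t\,B_n^T\lambda$ approaches $\mathbb{Z}^d$; the length vector $\lambda$ does not enter here. Rather, the eigenvalue equation $f\circ T=e^{2\pi i t}f$ is rewritten as $f(T(x))=e^{2\pi i v_j}f(x)$ on each $I_j$ with $v=t\cdot h$ a vector built from the heights (first-return times), and the criterion asserts $\|B_n^{\mathcal R}\cdot v\|_{\mathbb{R}^d/\mathbb{Z}^d}\to 0$ along a positive-density set of times. In the paper's language, this places $v$ in the \emph{weak stable space} $W^s([\lambda],\pi)$ of the cocycle.

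Second, and more substantively, the elimination step is not a Borel--Cantelli/Fubini argument on the product space of pairs $(\lambda,t)$. That scheme would at best give, for a.e.\ $\lambda$, a.e.\ $t$ --- which is useless, since one needs \emph{every} $t\notin\mathbb{Z}$ excluded for the given $\lambda$. Avila--Forni instead prove a structural lemma (the paper's Lemma~\ref{avilac}): for a locally constant, integral, uniform cocycle with an adapted set $\Theta$, if no line in $\mathcal{J}(\Theta)$ meets $E^{cs}(x)$ for a.e.\ $x$, then every line parallel to $\Theta$ meets $W^s(x)$ only in $\mathbb{Z}^d$. The hypothesis on $E^{cs}$ is then verified (the paper's Theorem~\ref{avilaf}) using Zariski density of the Rauzy--Veech monoid, which supplies twisting matrices sending the direction of any line to enough independent directions to contradict $\dim E^{cs}=g<2g-1$. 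The non-uniform hyperbolicity you invoke enters only to pin down $\dim E^{cs}$; the exclusion itself is this weak-stable-space lemma, not a Diophantine Borel--Cantelli estimate. Your plan as written lacks this mechanism and would not close the gap from ``most $t$'' to ``all $t$''.
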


This result comes as a consequence of a technical and strong machinery for the theory of measurable cocycles that Avila and Forni developed in \cite{avila2007weak} and the fact that the Kontsevich-Zorich cocycle is non-uniformly hyperbolic almost everywhere, proved by  Forni in 2002 \cite{forni2002deviation} and taken to a major generalization in 2007 by Avila and Viana proving that almost everywhere, that cocycle has simple Lyapunov spectrum \cite{avila2007simplicity}. 

Weak mixing for a measure-preserving transformation \( T \) on a probability space can be characterized in two equivalent ways: through Cesàro averages or through the spectral properties of its associated unitary operator.  

From the first viewpoint, \( T \) is weakly mixing if, for every pair of square-integrable functions \( f, g \in L^2(\mu) \), the averaged correlations between \( f \circ T^n \) and \( g \) converge to the product of their means:  
\[
\frac{1}{N} \sum_{n=0}^{N-1} \left| \int (f \circ T^n) g \, d\mu - \int f \, d\mu \int g \, d\mu \right| \longrightarrow 0 \quad \text{as } N \to \infty.
\]
Intuitively, this means that the dynamics progressively ``mix'' observables, eliminating statistical dependencies over time.  

From the spectral viewpoint, weak mixing is equivalent to the absence of non-trivial eigenfunctions for the unitary operator \( U_T \colon L^2(\mu) \to L^2(\mu) \) defined by \( U_T f = f \circ T \). In this case, the spectrum of \( U_T \) is purely continuous (no eigenvalues of modulus \( 1 \)), and the system is said to have a \emph{simple spectrum}.

Translation surfaces can be described as pairs \((S, \omega)\), where \(S\) is a Riemann surface and \(\omega\) is an abelian differential. This framework extends naturally to quadratic differentials: a translation surface corresponds to a pair \((S, q)\) where \(q\) is a quadratic differential that is globally the square of an abelian differential. The vertical foliation determined by \(q\) coincides with the vertical flow on the surface. This setting accounts for a full-measure subset of all quadratic differentials.  

When \(q\) is not globally a square, the pair \((S, q)\) defines a \emph{half-translation surface}, whose vertical foliation is \emph{non-orientable}. In this case, it is natural to ask whether the weak mixing property, well established for one-dimensional dynamics arising from orientable cases, persists in this measure-zero subset of quadratic differentials. The present work addresses this question.  

A key tool here is the notion of \emph{linear involutions}, which generalize interval exchange transformations (IETs) to encode the dynamics of vertical foliations on half-translation surfaces through one-dimensional maps. Unlike IETs, however, linear involutions cannot be obtained as first return maps to a transversal of a flow.  

In Appendix~A of \cite{solomyak2024note}, ~Hubert and ~Matheus present conditions on \(S\)-adic systems under which one can adapt the Avila--Forni approach to prove generic weak mixing. These ideas can be applied  to linear involutions. The aim of this paper is to make this connection precise, relating the Avila--Forni philosophy to the properties of the Kontsevich--Zorich cocycle for quadratic differentials. Our main result is as follows:

\begin{thm}\label{theorem2}
    Let $\pi$ be a dynamically irreducible generalized permutation, and let $\lambda$ be an admissible parameter such that the half-translation surface $S$ generated by $( \pi,\lambda)$ has genus $g(S) > 1$. Then, for almost every such $\lambda$, the linear involution $T(\pi,\lambda)$ is weakly mixing.
\end{thm}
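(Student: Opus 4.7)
The plan is to follow the Avila--Forni \cite{avila2007weak} strategy, adapted to the linear-involution setting via the $S$-adic formalism of Hubert--Matheus \cite{solomyak2024note}. First I would invoke a Veech-type reduction: since $T(\lambda,\pi)$ is generically uniquely ergodic on the set of admissible parameters, weak mixing is equivalent to the absence of non-constant measurable eigenfunctions, and it therefore suffices to show that, for almost every admissible $\lambda$, no real number $t\neq 0$ is a measurable eigenvalue of $T(\lambda,\pi)$.

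Next I would run the Rauzy--Veech induction for linear involutions (\`a la Boissy--Lanneau) to produce sequences $(\lambda^{(n)},\pi^{(n)})$ and Kontsevich--Zorich cocycle matrices $B^{(n)}$ encoding the Rokhlin-tower structure of the induced maps. Translating the eigenvalue equation $f\circ T = e^{2\pi i t} f$ onto the height vectors $h^{(n)} = B^{(n)}\mathbf{1}$ yields, exactly as in the abelian case, the approximate integrality condition $\|t\,h^{(n)}\|_{\mathbb{R}^d/\mathbb{Z}^d}\to 0$ at a controlled rate. The task is then to show that this condition is incompatible with hyperbolicity of the cocycle for almost every $\lambda$.

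The hyperbolicity comes from the cited results on the Kontsevich--Zorich cocycle over strata of quadratic differentials: assuming $g(S)>1$, the combined work of \cite{belldiagonal, gutierrez2019classification, trevino2013non} furnishes a non-uniformly hyperbolic Oseledets splitting on the anti-invariant sub-bundle $H^1_-$ of the orienting double cover, with a positive-dimensional stable subspace $E^{s}(\lambda,\pi)$ along which the $B^{(n)}$ contract exponentially. With the Hubert--Matheus framework providing the abstract conditions for the exclusion step, I would parameterize potential eigenvalues $t$ along stable leaves of the cocycle, apply a Borel--Cantelli-type estimate to rule out the integrality condition except on a null set of $\lambda$, and finally sum over the countable list of Rauzy classes of generalized permutations to conclude.

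The hard part will be the passage from the full cohomology $H^1$ used by Avila--Forni to the anti-invariant piece $H^1_-$ intrinsic to the half-translation structure. The Avila--Forni parameter-exclusion estimates must be reworked so that they use only the hyperbolicity available on $H^1_-$, and one must verify carefully that the stable subspace inside $H^1_-$ has dimension large enough to drive the argument. This is exactly where the hypothesis $g(S)>1$ and the cited non-uniform hyperbolicity and classification results do the decisive work; once those ingredients are in place, the translation of the Avila--Forni combinatorics to linear involutions, while technically delicate, should proceed along the same probabilistic lines.
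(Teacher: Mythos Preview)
Your proposal follows essentially the same route as the paper: Veech criterion for linear involutions, Rauzy--Veech/Zorich renormalization \`a la Boissy--Lanneau, identification of the relevant cocycle with the minus piece $H^{-}$ of the Kontsevich--Zorich cocycle on the orientation double cover, and then an Avila--Forni style elimination argument using the hyperbolicity results of \cite{belldiagonal,trevino2013non}. The paper carries this out by first proving the Veech criterion directly (your ``approximate integrality'' condition $\|B^{(n)}v\|_{\mathbb{R}^d/\mathbb{Z}^d}\to 0$), then verifying that the Zorich cocycle is locally constant, integral and uniform, and finally quoting the abstract Avila--Forni machinery (their Theorem~5.1 and the weak-stable-space lemma) rather than redoing a Borel--Cantelli argument from scratch.

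One point worth sharpening: where you invoke ``non-uniform hyperbolicity'' and a Borel--Cantelli estimate, the paper is more specific about \emph{what} is used from \cite{belldiagonal}. Mere positivity of Lyapunov exponents (Trevi\~no) is not by itself enough to run the exclusion step; the paper uses the \emph{Zariski density} of the minus Rauzy--Veech monoid to obtain twisting matrices, which is what forces any affine line not through the origin to miss $E^{cs}$ almost surely. That twisting ingredient is the precise replacement for the step in Avila--Forni that you allude to, and it is what makes the dimension count ($\dim E^{cs}=g<2g-1$ when $g>1$) decisive. Your proposal would go through, but you should name Zariski density explicitly rather than leaving it implicit under ``hyperbolicity''. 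Also, the final ``sum over Rauzy classes'' is unnecessary here: the statement is for a fixed dynamically irreducible $\pi$, and the Zorich induction stays within its (extended) Rauzy class.
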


\textbf{Remark.} If we restrict to parameters \(\lambda\) satisfying \(|\lambda| = 1\), then “almost every” in Theorem~\ref{theorem2} refers to Lebesgue measure of dimension \(d-2\), where \(d\) is the number of intervals in the linear involution.

The strategy of Avila--Forni can be outlined as follows. For an interval exchange transformation \(T(\lambda, \pi)\) with \(d\) subintervals, suppose there exists a measurable function \(f : I \to \mathbb{C}\) such that  
\begin{equation} \label{eq1}  
    f(T(x)) = e^{2\pi i v_j} f(x), \quad x \in I_j,  
\end{equation}  
for some vector \(v\) in the space where the Lyapunov exponents of the Rauzy--Veech cocycle are non-zero. The \emph{Veech criterion} \cite{veech1984metric} asserts that such a vector \(v\) must lie in the weak stable space of the Rauzy--Veech renormalization scheme. Avila--Forni show, via an elimination process, that this \emph{undesirable} situation occurs only on a null set: for almost every parameter \(( \pi,\lambda)\), any vector \(v\) satisfying \eqref{eq1} must in fact belong to the integer lattice \(\mathbb{Z}^d\). Membership in \(\mathbb{Z}^d\) forces all eigenfunctions to correspond to a purely continuous spectrum, which implies weak mixing.

To carry out this elimination, Avila--Forni exploit the fact that the Rauzy--Veech cocycle is non-uniformly hyperbolic \cite{forni2002deviation}, which was improved by \cite{avila2007simplicity} with the Zariski density of the monoid generated by the Rauzy--Veech matrices. They then prove a general theorem for integral, locally constant, uniform cocycles, a class which includes the Rauzy--Veech cocycle, ensuring that the exceptional set where \eqref{eq1} admits a non-trivial solution has measure zero.

 In Lemma~\ref{veechcriteria}, we present a precise formulation of a Veech criterion for linear involutions, adapting the ideas from Veech’s original proof in \cite{veech1984metric}. The philosophy behind the Veech criterion for weak mixing is quite general, applying to a wide range of dynamical systems (see \cite{mercat2024coboundaries}). In our setting, the proof relies on the fact that linear involutions faithfully encode the dynamics of vertical foliations on half-translation surfaces. As shown in \cite{boissy2009dynamics}, such surfaces can be decomposed into finitely many rectangles, and the vector \(v\) appearing in Equation~\eqref{eq1} corresponds to the geometric parameters of this decomposition.  

To apply the Avila--Forni strategy in the context of linear involutions, we focus on the ergodic properties of the Rauzy--Veech cocycle in this setting. Starting with a half-translation surface \((S, q, \Sigma)\) arising from the "suspension" of a linear involution, we pass to its double orientation cover \(\pi : R \to S\), where \((R, \pi^* q, \hat{\Sigma})\) is a translation surface. The homology \(H_1(R, \mathbb{R})\) admits an invariant splitting into \emph{plus} and \emph{minus} subspaces, \(H^+(R)\) and \(H^-(R)\), under the Kontsevich--Zorich cocycle on \(R\). The restriction of the cocycle to the plus part corresponds to its action on the homology of the translation surface \(S\), while the minus part encodes the evolution of the rectangle-decomposition parameters of \(S\) under the Rauzy--Veech renormalization (see \cite{gutierrez2019classification}). This minus component can thus be interpreted as the renormalization scheme for linear involutions described by Boissy--Lanneau~\cite{boissy2009dynamics}.

In \cite{trevino2013non}, ~Treviño proved that, for every connected component of a stratum of quadratic differentials \(\mathcal{C}\), the second Lyapunov exponent of both the plus and minus parts of the Kontsevich--Zorich cocycle is positive for almost every \(q \in \mathcal{C}\). While this positivity result is significant, in our setting even if non-uniform hyperbolicity alone seems sufficient to exclude problematic parameters, due to the more intricate combinatorics in the non-orientable case, as explained in \cite{boissy2009dynamics} it is not straightforward to adapt the arguments of Section~5 in \cite{avila2007weak} directly.  

This difficulty can be overcome by appealing to results from \cite{belldiagonal}, where the authors show that, for every connected component of a stratum \(\mathcal{C}\), the monoids generated by the plus and minus parts of the Kontsevich--Zorich cocycle are Zariski dense in their respective symplectic ambient spaces. Zariski density implies that the cocycle is both twisting and pinching, which, in particular, allows to deduce the existence of the matrices manually constructed  by Avila and Forni to exclude parameters in Section 5 \cite{avila2007weak}. Combining this with the Avila--Viana criterion \cite{avila2007simplicity} yields simplicity of the Lyapunov spectrum. Moreover, as explained in Section~7 of \cite{belldiagonal}, the same conclusion holds for the Lyapunov spectrum of the discrete version of the Kontsevich--Zorich cocycle. These results allow us to eliminate the \emph{undesirable} dynamics in the case of linear involutions.  

It is natural to attempt to follow \cite{avila2007weak} and establish an analogous weak mixing result for vertical foliations on half-translation surfaces. In the non-orientable case, however, the vertical foliation cannot be defined as a flow. To study weak mixing for minimal foliations in this setting, one must instead pass to the double orientation cover, obtaining a translation flow whose first return map to a transversal is an \emph{interval exchange transformation with involution} in the sense of \cite{avila2012exponential}. Then  one would need to prove a Veech criterion for such systems and deduce generic weak mixing for the flows. Nevertheless, in fact, a stronger statement has been obtained in \cite{arana2024weak}, where it is shown that the only cases in which weak mixing fails are square-tiled surfaces. This covers, in particular, the generic translation surfaces obtained from half-translation surfaces via the double orientation cover.  \\

\textbf{Acknowledgments}: I would like to thank Carlos Matheus for explaining the ideas of Avila--Forni and how to use Zariski density, also I would like to thank Rodolfo Gutierrez-Romo for explaining in detail the ideas of the decomposition of the Kontsevich-Zorich cocycle.

\section{Preliminaries}

\subsection{Half-translation surfaces}
Let $S$ be a connected Riemann surface of genus $g$, and let $\Sigma = {z_1, \ldots, z_s}$ be a finite subset associated with a pattern $\kappa = (n_1, \ldots, n_s)$, where each $n_i \in \{-1\} \cup \mathbb{N}$, satisfying $\sum_i n_i = 4g - 4$. A half-translation structure on $S$ is defined as an atlas on $S \setminus \Sigma$ such that for any two overlapping charts $\phi_i$ and $\phi_j$, the transition maps $\phi_i \circ \phi_j^{-1}$ takes the form $\phi_i(z) = \pm \phi_j(z) + c$ for some constant $c$. Additionally, around each point $z_i \in \Sigma$, there exists a neighborhood that is isometric to a Euclidean cone, for $S\setminus{\Sigma}$ there exists a flat metric, for which the points in $\Sigma$ are singularities of order $n_i$. Observe that with this construction, the holonomy group (derived from considering parallel transport along loops in the surface) is a subgroup of $\mathbb{Z}_2$.\\

Another way to present this structure is through quadratic differentials. Formally, a meromorphic quadratic differential is defined as a non-zero section of the symmetric square of the cotangent bundle of the surface $S$, specifically $q \in \mathcal{S}^2 T^*S$. In local coordinates $z$, it can be expressed as $q = f(z)  dz \otimes dz$ (or more commonly as $q = f(z)dz^2$), where $f$ is a meromorphic function. If we can define a quadratic differential $q = f(z) dz^2$ on a surface $S$ with singularities located in a set $\Sigma$, characterized by orders described by $\kappa$, then we say that $(S, q, \Sigma)$ is a \emph{half-translation surface}.\\

When the quadratic differential can be expressed as the \emph{square} of an abelian differential $\omega$, the resulting structure is referred to as a \emph{translation surface}. In this case, the atlas described above has the property that its transition maps are given by $\phi_i(z) = \phi_j(z) + c$, which implies that the holonomy group is trivial.\\

In a half-translation surface $(S, q, \Sigma)$, we can define two transversal foliations on $S \setminus \Sigma$ called the vertical $\mathcal{F}^v_q$ and horizontal $\mathcal{F}^h_q$ foliations. These foliations are given by the integral curves of $\ker(\text{Re}(q^{1/2}))$ and $\ker(\text{Im}(q^{1/2}))$, respectively. Both foliations are measurable, and the points in $\Sigma$ represent $n_i$-pronged singularities for the foliation. One can verify that the foliation is globally orientable if and only if every $n_i$ is even. This is the case when the quadratic differential is the global square of an abelian differential. (See \cite{farb2011primer} for a careful treatment). For a half-translation surface, we will say that a singularity is \emph{non-orientable} if the number of prongs is odd.\\

There is a natural distinction between quadratic differentials that arise as the global square of an abelian differential and those that do not. The former are called \emph{orientable}, while the latter are termed \emph{non-orientable}. In this work, we focus on the non-orientable class.\\

Consider a Riemann surface $S$ of genus $g \geq 1$. The Teichmüller space of its meromorphic quadratic differentials, denoted $\mathcal{T} \mathcal{L}_g$, is defined as the quotient of the set of meromorphic quadratic differentials by the group of orientation-preserving diffeomorphisms on $S$ that are isotopic to the identity. Specifically, $q \sim q'$ if there exists $f \in \mathrm{Diff}_0^+(S)$ such that $f^*(q) = q'$. We can consider $Q_g$ as the subspace that arise from considering non-orientable quadratic differentials in $\mathcal{T} \mathcal{L}_g$. \\

Furthermore, we define the moduli space $\mathcal{M}_g$ as the quotient of $\mathcal{T} \mathcal{L}_g$ by the mapping class group $\Gamma_g$ of the surface. We can partition $\mathcal{M}_g$ into subsets corresponding to quadratic differentials that arise as the square of an abelian differential and those that do not. The latter are central to our discussion, and we refer to the moduli space of non-orientable quadratic differentials as $\mathcal{Q}_g$.\\

These moduli spaces are naturally stratified by the orders of the singularities. If $\kappa$ denotes a singularity pattern for elements in $(S, q, \Sigma)$, given by $\kappa = (n_1, \ldots, n_s)$ with \mbox{$\sum_i n_i = 4g - 4$}, we can define
$$\mathcal{Q}_\kappa=\mathcal{Q}_g\cap \{ q\:\: with \:\: pattern\:\: of \:\: singularities \:\: \kappa\}.$$

That definition of course works for considering the strata of non-orientable quadratic differentials with that pattern of singularities before quotienting by $\Gamma_g$, defined as $Q_\kappa$. These stratified spaces are complex orbifolds of dimension $2g + s - 2$, and they are not necessarily connected. E. Lanneau provided a complete classification of the connected components of these spaces \cite{lanneau2008connected}. It is worth mentioning that if $q$ is an orientable quadratic differential defined on a surface of genus $g$, then its pattern of singularities $\kappa=(n_1,...,n_s)$ fulfills that  $\sum_i n_i=2g-2.$\\

Consider a natural action on $\mathcal{Q}_\kappa$ by $SL(2, \mathbb{R})$ described as follows: if $(S, q, \Sigma)$ is a half-translation surface, we start with the maximal atlas on $S \setminus \Sigma$. For any $A \in SL(2, \mathbb{R})$, define $A \cdot (S, q, \Sigma)$ as the half-translation structure obtained by post-composing the original atlas of $(S, q, \Sigma)$ with $A$. This action preserves the orders of the singularities in $\Sigma$, and therefore it preserves the stratification of the moduli spaces.\\

There exists a \textit{natural} invariant measure on $\mathcal{Q}\kappa$ in the Lebesgue class, denoted by $\lambda\kappa$, which, however, has infinite mass. To address this, we proceed as follows: a quadratic differential $q$ on a Riemann surface $S$ defines an area form given by $A(q)=\int_S |q|$.\\

We restrict our attention to the set of quadratic differentials that yield area $1$. By considering the appropriate quotient, we define the hypersurface $\mathcal{Q}\kappa^{(1)}$. By disintegrating the measure $\lambda\kappa$ along the level sets of the area function $q \mapsto A(q)$, we obtain a Lebesgue measure $\lambda_\kappa^{(1)}$ on $\mathcal{Q}_\kappa^{(1)}$.\\

A key action on $\mathcal{Q}_\kappa^{(1)}$ is the one induced by the diagonal group, represented by the matrices:
$$\left \{g_t= \begin{pmatrix}
e^t & 0 \\
0 & e^{-t}
\end{pmatrix} \:\: t\in \mathbb{R} \right \} \leq SL(2,\mathbb{R}).$$

The action of this group, known as the \emph{diagonal flow}, plays a crucial role in the renormalization process, as we will see shortly. Furthermore,  H. Masur proved for the principal stratum ($\kappa=(1,...,1))$ \cite{masur1982interval} and then W. Veech for any stratum \cite{veech1986teichmuller}  that the diagonal flow acts ergodically on each $\mathcal{Q}\kappa^{(1)}$ with respect to the  finite measure $\lambda\kappa^{(1)}$. As a consequence, it can be shown that for almost every quadratic differential $q$ on $S$, its vertical foliation is uniquely ergodic. This result was later strengthened by Kerkhoff, Masur, and Smillie, who demonstrated that for every quadratic differential $q$ on $S$, the foliation in almost any direction $\theta \in [0, 2\pi)$ is uniquely ergodic \cite{kerckhoff1986ergodicity}.\\

Consider a construction called the \emph{double orientation cover}, which takes a non-orientable quadratic differential \( q \) on \( S \) and produces an orientable quadratic differential. This construction can be thought of as the unique (up to isomorphism) double-sheeted cover of \( S \), branched over the singularities of odd order, such that it \textit{unwinds} the non-orientable parts of \( S \).\\

Let \( (S, q, \Sigma) \) be a non-orientable quadratic differential, where \( \{ (z_j, U_j) \} \) is an atlas on \( S \setminus \Sigma \). We assume that \( q \) is locally defined as \( f_j(z_j) \, dz_j^2 \) on each \( U_j \). For each set \( U_j \), consider two copies, \( U_j^+ \) and \( U_j^- \). On each \( U_j \), let \( g_j^+ \) and \( g_j^- \) be the two square roots of \( f_j(z_j) \).\\

The first step of the construction is to consider the natural maps \( \pi_j^\pm: U_j^\pm \to U_j \). For the gluings, we first consider the case of overlapping charts \( U_j \cap U_i \). If $$ g_j^+(z_j(p)) \frac{dz_j}{dz_i}(p) = g_i^+(z_i(p))$$ (the orientable case), we identify \( (\pi_j^\pm)^{-1}(U_i \cap U_j) \) with \( (\pi_i^\pm)^{-1}(U_i \cap U_j) \). In the case when $$ g_j^+(z_j(p)) \frac{dz_j}{dz_i}(p) = g_i^-(z_i(p)) $$ (the non-orientable part), we identify \( (\pi_j^\mp)^{-1}(U_i \cap U_j) \) with \( (\pi_i^\pm)^{-1}(U_i \cap U_j) \). These identifications define a non-branched cover over \( S \setminus \Sigma \).\\

Moreover, the expressions \( g_j^\pm \circ z_j \circ \pi_j^\pm \, d(z_j \circ \pi_j^\pm) \) locally define an abelian differential \( \omega \). Consequently, if \( R \) is the underlying translation surface with the branched cover \( \pi: R \to S \), we have that \( \pi_* q = \omega^2 \).\\

If \( z_i \) is a pole on \( S \), then \( \pi^{-1}(z_i) \) becomes a marked regular point. If \( z_i \) is a singularity with odd order \( n_i \), then in the translation surface, it will have order \( n_i + 1 \). Finally, if \( z_i \) is a singularity of even order, \( \pi^{-1}(z_i) \) will split into two singularities of order \( \frac{n_i}{2} \).\\

Thus, if \( \kappa = (n_1, \ldots, n_s, n_{s+1}, \ldots, n_{s+r}, p_1, \ldots, p_t) \) is the pattern of singularities of a quadratic differential \( (S, q, \Sigma) \), where \( n_1, \ldots, n_s \) have odd order and \( p_1, \ldots, p_t \) are poles, then the translation surface \( (R, \omega, \hat{\Sigma}) \) satisfies \( \hat{\Sigma} = \pi^{-1}(\Sigma) \setminus \pi^{-1}(\{ p_1, \ldots, p_t \}) \) and has a pattern of singularities given by \( \hat{\kappa} = (n_1+1, \ldots, n_s+1, \frac{n_{s+1}}{2}, \frac{n_{s+1}}{2}, \ldots, \frac{n_{s+r}}{2}, \frac{n_{s+r}}{2}) \). This construction yields an embedding \( \mathcal{Q}_\kappa \hookrightarrow \mathcal{H}_{\hat{\kappa}} \).\\

In the double orientation cover \( R \), there exists a canonical involution \( \iota \) that interchanges \( U_j^\pm \) with \( U_j^\mp \). It is important to note that, in this context, the double orientation cover\\ $\pi~: R~\to~S$ can be viewed as the quotient map \( \pi: R \to R / \iota \). The involution \( \iota \) induces an action on the homology \( H_1(R, \mathbb{R}) \), which gives rise to a decomposition of homology into invariant and anti-invariant parts:

\[
H_1^+(R, \mathbb{R}) = \{ c \in H_1(R, \mathbb{R}) \mid \iota_*(c) = c \},
\]
\[
H_1^-(R, \mathbb{R}) = \{ c \in H_1(R, \mathbb{R}) \mid \iota_*(c) = -c \}.
\]

This decomposition separates the invariant part \( H_1^+(R, \mathbb{R}) \) and the anti-invariant part \( H_1^-(R, \mathbb{R}) \). Moreover, \( H_1^+(R, \mathbb{R}) \) is isomorphic to \( H_1(S, \mathbb{R}) \), as shown by the following argument:\\

The induced transformations on homology satisfy the relation \( \pi_* \circ \iota_* = \pi_* \). Therefore, if \( c \in H_1^-(R, \mathbb{R}) \), then \( \pi_*(c) = -\pi_*(c) = 0 \), which implies that \( H_1^-(R, \mathbb{R}) \subseteq \ker \pi_* \).\\ 

On the other hand, any curve \( \gamma \) in \( S \) can be lifted to two curves \( \gamma_1 \) and \( \gamma_2 \) in \( R \). Note that \( [\gamma_1] + [\gamma_2] \in H_1^+(R, \mathbb{R}) \), and \( \pi_*([\gamma_1] + [\gamma_2]) = 2[\gamma] \). By linearity, we conclude that the restriction \( \pi_* |_{H_1^+(R, \mathbb{R})}: H_1^+(R, \mathbb{R}) \to H_1(S, \mathbb{R}) \) is an isomorphism.
This isomorphism plays a crucial role in the understanding of the Kontsevich-Zorich cocycle on non-orientable quadratic differentials.

\subsection{Linear Involutions}
As mentioned earlier, when analyzing the vertical foliation of a half-translation surface that is not globally the square of an abelian differential, the classical first return map to a transversal section in the northward direction is insufficient. In such cases, the foliation's non-orientability produces subintervals where the first return map is periodic of period~2, failing to capture the true dynamics. To address this, Danthony and Nogueira \cite{danthony1990measured} introduced the notion of a \emph{linear involution}, described as follows.

Consider two copies of an interval \(X\), denoted \(\overline{X} = X \times \{0,1\}\). For a point \(x \in X \times \{1\}\), follow its trajectory along the vertical foliation northwards. If the first return to \(\overline{X}\) occurs at \((x_1,1)\), switch to \((x_1,0)\) and continue southwards. Conversely, if the return occurs at \((x_1,0)\), switch to \((x_1,1)\) and continue northwards. For points in \(X \times \{0\}\), the procedure starts southwards, with analogous switching upon return. This extended process accurately models the vertical foliation.

Singularities in the half-translation surface imply that some points in \(\overline{X}\) are not defined under this process, naturally partitioning \(\overline{X}\) into subintervals. The half-translation structure ensures that the dynamics on each subinterval is an isometry: orientation-reversing if the point returns to the same component, and orientation-preserving if it returns to the opposite component. Consequently, the number of subintervals is always even.

To formalize this as a dynamical system, one encodes the combinatorics of the subintervals via a \emph{generalized permutation} recording their order.  

Let \(\mathcal{A}\) be an alphabet of \(d\) letters, and let \(\pi: \{1,\dots,2d\} \to \mathcal{A}\) be a two-to-one map, represented as
\[
\pi = 
\begin{pmatrix}
\pi(1) & \cdots & \pi(l) \\
\pi(l+1) & \cdots & \pi(l+m)
\end{pmatrix},
\]
where \(l + m = 2d\). Note that \(l\) and \(m\) are not necessarily equal. If \(\pi\) is equipped with a fixed-point-free involution \(\sigma\) such that \(\pi^{-1}(i) = \{i, \sigma(i)\}\), we call \(\pi\) a \emph{generalized permutation}.  

Consider vectors \(\lambda \in \mathbb{R}_+^d\) satisfying
\begin{equation} \label{parameterequation}
    \sum_{i \leq l} \lambda_{\pi(i)} = \sum_{i > l} \lambda_{\pi(i)}.
\end{equation}
Assuming this set is non-empty, we construct a linear involution as follows.

Take two copies of an interval of length equal to the common sum in \eqref{parameterequation}, partitioning the first into \(l\) subintervals and the second into \(m\) subintervals according to \(\lambda_{\pi(i)}\). The generalized permutation \(\pi\) prescribes the order, while \(\lambda\) determines the lengths. The map is defined by:
\begin{itemize}
    \item Identifying subintervals with the same label in the same component via an orientation-reversing isometry.  
    \item Identifying subintervals in different components via an orientation-preserving isometry.
\end{itemize}

The transformation proceeds in two steps:
\begin{enumerate}
    \item Match subintervals with the same label according to \(\mathcal{A}\), using the appropriate isometry.  
    \item Apply an involution exchanging the two components.
\end{enumerate}

The pair \((\pi, \lambda)\) thus defines a linear involution (see Figure~\ref{linearinvolution:fig}), which accurately models the vertical foliation dynamics on a half-translation surface.

Observe that, if the set of \(\lambda\) satisfying \eqref{parameterequation} is non-empty, it forms the intersection of the positive cone \(\mathbb{R}^d_+\) with a hyperplane, yielding a \((d-1)\)-dimensional set, which we denote by \(\overline{W}_\pi\).

Observe that if for a generalized permutation  $\pi$, the set of $\lambda$ satisfying \ref{parameterequation} is not empty. Then it is the intersection of the positive cone $\mathbb{R}^d$ with a hyperplane, thus it has dimension $d-1$. 

\begin{figure} 
    \centering

\begin{tikzpicture}[scale=2] 
\coordinate (A) at (0, 0);        
    \coordinate (B) at (2, 1);      
    \coordinate (C) at (3, .5);        
    \coordinate (D) at (3, -0.5);     
    \coordinate (E) at (1, -1.5);       
    \coordinate (F) at (0, -1);       
        
    \coordinate (B1) at (5, 1.5);      
    \coordinate (C1) at (6, 1);        
    \coordinate (D1) at (6, 0);     
    \coordinate (E1) at (4, -1);

    \draw[thick] (A) -- (B) -- (C)-- (B1)-- (C1) -- (D1) -- (E1)-- (D)-- (E) -- (F) -- cycle;
 \coordinate (M1) at (1, 0.5);      
    \coordinate (M2) at (2.5, 0.75);   
    \coordinate (M3) at (4, 1.1);   
    \coordinate (M4) at (5.5, 1.25);   
    \coordinate (M5) at (6, .5);    
    \coordinate (M6) at (5, -0.5);     
    \coordinate (M7) at (3.4, -.75);  
    \coordinate (M8) at (2.2, -1.05);  
    \coordinate (M9) at (0.5, -1.2);   
    \coordinate (M10) at (0, -0.5);    
    \node at (M1) [above left] {I};
    \node at (M2) [above right] {II};
    \node at (M3) [above right] {I};
    \node at (M4) [above right] {III};
    \node at (M5) [right] {IV};
    \node at (M6) [below right] {V};
    \node at (M7) [below left] {II};
    \node at (M8) [below left] {V};
    \node at (M9) [below left] {III};
    \node at (M10) [above left] {IV};

    \draw[thick] (.5,.2)--(5.9,.2);
    \draw[thick] (.5,0)--(5.9,0);
    \draw[dotted, thick, red, ->] (1.1,.2)--(1.1,.58);
     \draw[dotted, thick, red, ->] (4.3,1.1)--(4.3,.2);
     \draw[dotted, thick, blue, ->] (3.6,0)--(3.6,-.8);
     \draw[dotted, thick, blue, ->] (2.5,.7)--(2.5,.2);
     \draw[thick] (2,.17)--(2,.23);
     \draw[thick] (3,.17)--(3,.23);
     \draw[thick] (4.5,.17)--(4.5,.23);
     \draw[thick] (5,.17)--(5,.23);
     \draw[thick] (5.5,.17)--(5.5,.23);
     \draw[thick] (1,-.03)--(1,.03);
     \draw[thick] (2.9,-.03)--(2.9,.03);
     \draw[thick] (3.9,-.03)--(3.9,.03);
     
\draw[thick] (.5,-3.3)--(5.9,-3.3);
\draw[thick] (.5,-3.5)--(5.9,-3.5);

\draw[thick] (2,-3.53)--(2,-3.47);
\draw[thick] (3,-3.53)--(3,-3.47);
\draw[thick] (4.5,-3.53)--(4.5,-3.47);
\draw[thick] (5,-3.53)--(5,-3.47);
\draw[thick] (5.5,-3.53)--(5.5,-3.47);
\draw[thick] (1,-3.33)--(1,-3.27);
\draw[thick] (2.9,-3.33)--(2.9,-3.27);
\draw[thick] (3.9,-3.33)--(3.9,-3.27);

\node [scale=.8] at (1.5,-3.65) {$T(A_2)$};
\node [scale=.8] at (2.5,-3.65) {$T(B_2)$};
\node [scale=.8] at (3.8,-3.65) {$T(A_1)$};
\node[scale=.7] at (4.8,-3.65) {$T(C_2)$};
\node[scale=.7] at (5.3,-3.65) {$T(D_2)$};
\node[scale=.7] at (5.7,-3.65) {$T(C_1)$};
\node [scale=.8] at (.7,-3.2) {$T(D_1)$};
\node [scale=.8] at (1.8,-3.2) {$T(E_2)$};
\node [scale=.8] at (3.4,-3.2) {$T(B_1)$};
\node [scale=.8] at (5, -3.2)  {$T(E_1)$};

\draw[thick] (.5,-2.3)--(5.9,-2.3);
\draw[thick] (.5,-2.5)--(5.9,-2.5);

\draw[thick] (2,-2.33)--(2,-2.27);
\draw[thick] (3,-2.33)--(3,-2.27);
\draw[thick] (4.5,-2.33)--(4.5,-2.27);
\draw[thick] (5,-2.33)--(5,-2.27);
\draw[thick] (5.5,-2.33)--(5.5,-2.27);
\draw[thick] (1,-2.53)--(1,-2.47);
\draw[thick] (2.9,-2.53)--(2.9,-2.47);
\draw[thick] (3.9,-2.53)--(3.9,-2.47);
\node at (1.5,-2.2) {$A_1$};
\node at (2.5,-2.2) {$B_1$};
\node at (3.8,-2.2) {$A_2$};
\node at (4.8,-2.2) {$C_1$};
\node at (5.3,-2.2) {$D_1$};
\node at (5.7,-2.2) {$C_2$};
\node at (.7,-2.63) {$D_2$};
\node at (1.8,-2.63) {$E_1$};
\node at (3.4,-2.63) {$B_2$};
\node at (5, -2.63)  {$E_2$};
\end{tikzpicture}
  \caption{Note that the interval corresponding to $A_1$ has the same length as the interval corresponding to $A_2$ (same applies for the other letters). This enumeration is simply to clarify how the transformation $T$ operates. Additionally, recall that if the intervals corresponding to $\pi(i)$ and $\sigma(\pi(i))$ lie in the same connected component, the transformation acts as a translation followed by a flip.}
    \label{linearinvolution:fig}
\end{figure}
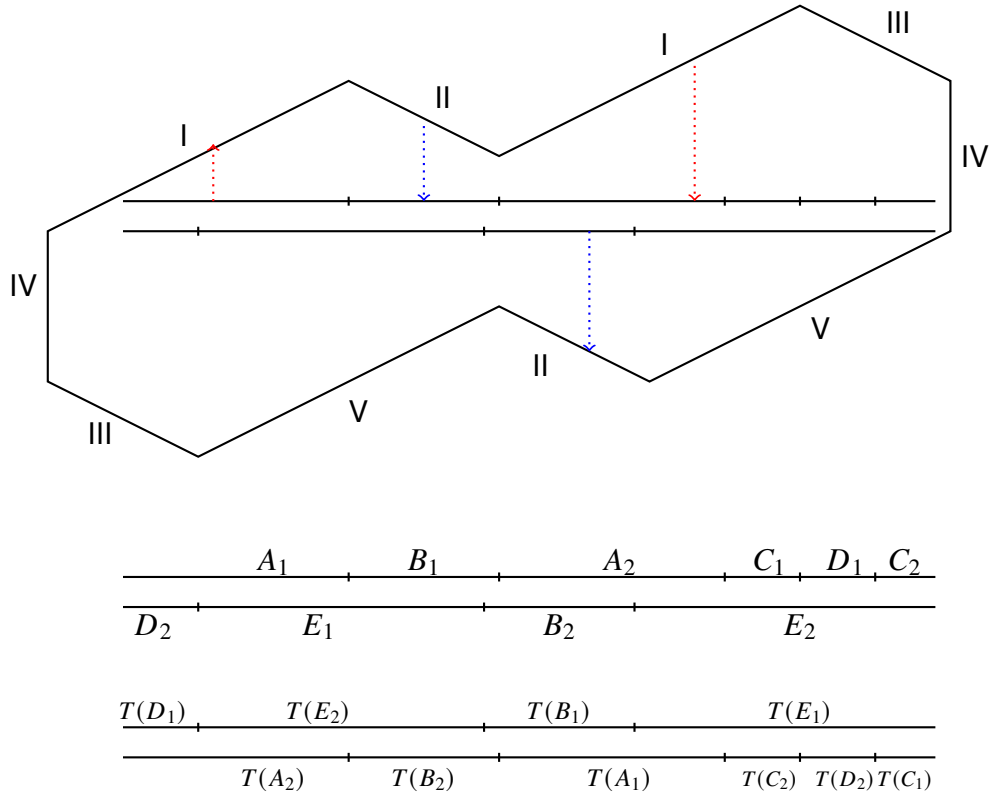

Since we are interested in linear involutions that arise from considering intersections with vertical foliations of half-translation surfaces, we will assume that for a linear involution $T(\pi,\lambda)$, the generalized permutation $\pi$ satisfies the following condition: there exist at least two numbers $i \leq l$ and $l+1 \leq j \leq l+m$ such that $\sigma(i) \leq l$ and $l+1 \leq \sigma(j) \leq l+m$.

\textbf{Notation:} We will denote the quantity:
\[
L = \sum_{\pi(i) \leq l} \lambda_{\pi(i)} = \sum_{i > \pi(l)} \lambda_{\pi(i)}.
\]

A key tool for studying dynamics in interval exchange transformations is a process known as \emph{Rauzy--Veech induction}. We can extend this process to linear involutions \( T(\pi, \lambda) \) by defining it as the first return map on the space 
\[
\overline{X}^{(1)} = \max\{L - \lambda_{\pi(l)}, L - \lambda_{\pi(l+m)}\} \times \{0,1\}.
\]
When this transformation is well-defined, the first return map is again a linear involution on the same number of subintervals. This process can be interpreted as a transformation \( \mathcal{Q} \) acting on the space of linear involutions on an alphabet \( \mathcal{A} \). Specifically, it induces a new generalized permutation $\pi^{(1)}$ and a new partition of subintervals for the space \( \overline{X}^{(1)} \). The transformed involution is denoted as \( \mathcal{Q}(\pi, \lambda) = (\pi^{(1)}, \lambda^{(1)}) \). 
\\

If the Rauzy--Veech induction is well-defined after \( n \) iterations, we denote the result as \( \mathcal{Q}^n(\pi, \lambda) = (\pi^{(n)}, \lambda^{(n)}) \), where for each $\pi^{(n)}$ we havet that the involution satisfies \( (\pi^{(n)})^{-1}(i) = \{i, \sigma^{(n)}(i)\} \). The space in which the transformation is defined is \(\overline {X^{(n)} }\), and the subintervals defining the linear involution are denoted as \( X_{\pi^{(n)}(i)}^{(n)} \). \\

A detailed description of the generalized permutations obtained after applying \mbox{Rauzy--Veech} induction can be found in \cite{boissy2009dynamics}. The transition from \( \lambda^{(1)} \) to \( \lambda \) can be described by a \( d \times d \) matrix. Namely $B(\pi,\lambda)\cdot\lambda^{(1)}=\lambda$.  For a linear involution denote by $\alpha_0$ the label for the $l$-th interval in the partition and $\alpha_1$ the $l+m$-th interval. If $\lambda_{\alpha_1}> \lambda_{\alpha_0}$ then we can define a matrix \[
B(\pi,\lambda)_{\alpha,\beta} =
\begin{cases}
1, & \text{if } \alpha=\beta, \text{or} \alpha=\alpha_1, \beta=\alpha_0 \\

0, & \text{In any other case}
\end{cases}
\]
We define an analogous matrix when $\lambda_{\alpha_0}>\lambda_{\alpha_1}$.
\\

Also if the Rauzy--Veech induction can be applied $n$ times to $(\pi,\lambda)$ then the matrix $$B^n(\pi,\lambda)=B(\pi^{(n-1)},\lambda^{(n-1)}) \cdot B(\pi^{(n-2)},\lambda^{(n-2)})\cdot ...\cdot B(\pi^{(1)},\lambda^{(1)})$$ satisfies that $$B^n(\pi,\lambda)\cdot \lambda^{(n)}=\lambda.$$

Observe that $\lambda_{\alpha_0} \neq \lambda_{\alpha_1}$ is a necessary condition for this process to be defined, but it is not necessarily sufficient. \\

The nomenclature of the renormalization will be regarded as \emph{top} or \emph{bottom} depending on which quantity, $\lambda_{\alpha_0}$ or $\lambda_{\alpha_1}$, is larger. There is an acceleration process for this renormalization, usually referred to as \textit{Zorich induction}, such that it only takes into consideration when it changes from top to bottom or bottom to top. The matrix is refered as $Z$.\\

Another interesting fact about this matrix, which we call Rauzy--Veech matrix is that it can be regarded as the \textit{visiting matrix}:
 A linear involution \( T(\pi, \lambda) \) defines a partition of two intervals into subintervals \( \{I_{\alpha}, I_{\sigma(\alpha)}\}_{\alpha \in \mathcal{A}} \). After applying one step of Rauzy--Veech induction, the induced linear involution defines a new partition \( \{I^{(1)}_\alpha, I^{(1)}_{\sigma(\alpha)}\}_{\alpha \in \mathcal{A}} \) of a proper subset of the initial components.  The matrix \( B(\pi, \lambda)^* \), the transpose of the Rauzy--Veech matrix, has an interpretation as a \emph{visiting matrix}.  Each entry \( B(\pi, \lambda)^*_{\alpha, \beta} \) counts the number of times one interval labeled by \( \alpha \) in the new partition visits one interval labeled by \( \beta \) in the previous partition before returning. One useful fact, that will be used in Lemma \ref{veechcriteria} is that this count is independent of the choice of the subinterval labeled by \( \alpha \). More generally, \( B^n(\pi, \lambda)^* \) is the visiting matrix  in the same sense from the partition associated to the linear involution \( (\pi^{(n)}, \lambda^{(n)}) \) to the original partition.\\

Consider a linear involution $T(\pi,\lambda)$ with $\{\lambda_\alpha\}_{\alpha \in \mathcal{A}}$ denoting the lengths of the subintervals that define the transformation. We say that a collection of complex numbers $\{\zeta_\alpha\}_{\alpha \in \mathcal{A}}$ is a \emph{suspension data} for the linear involution if the following conditions are satisfied:
\begin{enumerate}
    \item For every $\alpha \in \mathcal{A}$, it holds that $Re(\zeta_\alpha) = \lambda_\alpha$.
    \item For every $1 \leq j < l$, 
    \[
    \sum_{\pi(i) \leq j} Im(\zeta_{\pi(i)}) > 0.
    \]
    \item For every $l+1 \leq j < l+m$, 
    \[
    \sum_{l < \pi(i) \leq j} Im(\zeta_{\pi(i)}) < 0.
    \]
    \item The following equality is satisfied:
    \[
    \sum_{\pi(i) \leq l} Im(\zeta_{\pi(i)}) = \sum_{l < \pi(i) \leq l+m} Im(\zeta_{\pi(i)}).
    \]
\end{enumerate}

Given a suspension data $\{\zeta_\alpha\}_{\alpha \in \mathcal{A}}$ for a linear involution $T(\pi,\lambda)$, we can define two piecewise linear segments, $L_0$ and $L_1$, as follows:\\

-The vertices of $L_0$ are determined by the complex numbers 
\[
\{0, \zeta_{\pi(1)}, \ldots, \sum_{\pi(i) \leq l} \zeta_{\pi(i)}\}.
\]
-The vertices of $L_1$ are determined by the complex numbers 
\[
\{0, \zeta_{\pi(l+1)}, \ldots, \sum_{l+1 \leq \pi(i) \leq l+m} \zeta_{\pi(i)}\}.
\]

One can verify that it is possible to identify parallel segments via translation. This identification yields a half-translation surface where the "first return" process described before to $\overline{X}$  by the vertical foliation coincides with the linear involution $T(\pi,\lambda)$.\\

Given a linear involution $T(\pi,\lambda)$ and a half-translation surface $S$ constructed from suspension data $\{\zeta_\alpha\}_{\alpha \in \mathcal{A}}$, observe that for any $\alpha \in \mathcal{A}$, the first return time of any point $x \in X_i$ under the vertical flow into $X_{\sigma(i)}$ is constant. This return time, denoted as $h_{\pi(i)}$, depends only on the generalized permutation $\pi$ and the suspension data $\{\zeta_\alpha\}_{\alpha \in \mathcal{A}}$.  \\

In particular, this implies that the rectangle $(0, \lambda_{\pi(i)}) \times (0, h_{\pi(i)})$ can be embedded into $S$. Thus, as in the case of translation surfaces, one can perform the so-called \emph{Veech zippered rectangle} construction. This construction provides an isometric representation of $S$ as a quotient of rectangles $\bigsqcup_{\alpha \in \mathcal{A}} R_{\alpha}$, where each rectangle is given by $R_\alpha = (0, \lambda_{\alpha}) \times (0, h_{\alpha})$. The identifications of the edges depend only on the linear involution $T(\pi,\lambda)$ and the generalized permutation $\pi$.  \\

For further details, see \cite{boissy2009dynamics} and Section 4 \cite{belldiagonal} for the case of linear involutions or \cite{viana2006ergodic} for a more detailed exposition in the case of interval exchange transformations. This representation of the surface $S$ provides a valuable framework for understanding the relationship between the Rauzy--Veech renormalization process and the \textit{lack} of weak mixing, as discussed in Lemma \ref{veechcriteria}. In this setting we can perform a renormalization process described by Rauzy--Veech matrices.\\

We are interested in considering linear involutions that arise from   half-translation surfaces and, moreover, for which Rauzy--Veech induction can be applied infinitely many times. We will present the conditions found by Boissy--Lanneau \cite{boissy2009dynamics} to ensure this.\\
 Now we will state a notion of \textit{reducibility} for generalized permutations. Consider an alphabet of $d$ letters $\mathcal{A}=\{\alpha_1,...,\alpha_d\}$ and a generalized permutation represented as

\[ \pi=
\left( \begin{array}{c|c|c}

\text{$A\cup B$} & \text{$***$} & \text{$B\cup D$} \\
\hline
\text{$A\cup C$} & \text{$***$} & \text{$D\cup C$} \\
\end{array} \right)
\]
where the sets $A,B,C,D$ are possible empty unordered sets of $\mathcal{A}$. We say that $\pi$ is \emph{reducible} if some of the following holds:
\begin{enumerate}
    \item There are no empty corner.
    \item There is exactly one empty corner and it is on the left.
    \item There are exactly two empty corners and both of them are on the right or on the left.
\end{enumerate}

We say that the generalized permutation $\pi$ is irreducible if it is not reducible. Theorem 3.2 in \cite{boissy2009dynamics} asserts that if $T(\pi,\lambda)$ is a linear involution, then there exists a suspension data if and only if the generalized permutation $\pi$ is irreducible. \\

This \textit{geometric irreducibility} is not enough, since this does not imply that we can apply Rauzy--Veech induction infinitely many times, therefore we will state what Boissy-Lanneau refer as \emph{dynamically irreducible.}\\

\textbf{Notation:} For a linear involution $T(\pi,\lambda)$  on an alphabet $\mathcal{A}$, we consider a partition on it such that \begin{itemize}
    \item $\mathcal{A}_{01}$ is the set such that for every \( \pi(i) \in \mathcal{A}_{01} \), the intervals \( X_{\pi(i)} \) and \( X_{\sigma(i)} \) belong to different connected components of \( \overline{X} \).
    \item The set \( \mathcal{A}_0 \) satisfies that for every \( \pi(i) \in \mathcal{A}_0 \), the intervals \( X_{\pi(i)} \) and \( X_{\sigma(i)} \) are contained in \( X \times \{0\} \).
    \item The set \( \mathcal{A}_1 \) satisfies that for every \( \pi(i) \in \mathbf{A}_1 \), the intervals \( X_{\pi(i)} \) and \( X_{\sigma(i)} \) are contained in \( X \times \{1\} \).
\end{itemize}

If $T(\pi,\lambda)$ is a linear involution on $X\times\{0,1\}$ we say that $(x,\epsilon)\in X\times \{0,1\}$ is a \textit{connection} of length $n$ if $(x,\epsilon)$ is a singularity for $T^{-1}$ and $T^n(x,\epsilon)$ is a singularity for $T$. We say that $T$ satisfies \textit{ the Keane} condition when it does not have connections of any length. This is equivalent to say that the Rauzy--Veech induction $\mathcal{R}^n(\pi,\lambda)$ is always defined (Proposition 4.2 \cite{boissy2009dynamics}).
We define the length parameter \(\lambda\) of a linear involution \(T(\pi, \lambda)\) as \emph{admissible} if the following conditions are not satisfied:

\begin{enumerate}
    \item The generalized permutation \(\pi\) can be decomposed as any of the following:
    \[
    \pi = \left( \begin{array}{c|c}
    A & \text{***} \\
    \hline
    A & \text{***}
    \end{array}\right), \quad
    \pi =\left( \begin{array}{c|c}
    D & \text{***} \\
    \hline
    D & \text{***}
    \end{array}\right), \quad
    \pi = \left( \begin{array}{c|c}
    A \cup B & B \cup D \\
    \hline
    A \cup C & C \cup D
    \end{array}\right).
    \]
    Here:
    \begin{itemize}
        \item \(A, D \subseteq \mathcal{A}_{0,1}\), where \(A\) and \(D\) are non-empty in the first two cases.
        \item \(B = \mathcal{A}_0\), \(C = \mathcal{A}_1\).
    \end{itemize}
    
    \item The generalized permutation \(\pi\) can be decomposed as:
    \[
    \pi = \left(
    \begin{array}{c|c|c}
    A \cup B & \text{***} & B \cup D \\
    \hline
    A \cup C & \beta, \text{***}, \beta & D \cup C
    \end{array}\right),
    \]
    where:
    \begin{itemize}
        \item \(A, D \subseteq \mathcal{A}_{0,1}\),
        \item \(\emptyset \neq B \subseteq \mathcal{A}_0\),
        \item \(C \subseteq \mathcal{A}_1\).
    \end{itemize}
    
    Furthermore, the length parameters \(\{\lambda_\alpha\}_{\alpha \in \mathcal{A}}\) must satisfy the inequality: \[
    \sum_{\alpha \in A} \lambda_\alpha \leq \sum_{\alpha \in B} \lambda_\alpha + \lambda_\beta + \sum_{\alpha \in C} \lambda_\alpha.
    \]
\end{enumerate}
 We say that a linear involution is \emph{dynamically irreducible} if the set of admissible parameters is non-empty. Note that this set is always open. Theorem B in \cite{boissy2009dynamics} states that for dynamically irreducible linear involutions, Rauzy--Veech induction is always well defined. Moreover, from Theorem 12.1 and Proposition 6.2 in \cite{gadre2012dynamics} we can deduce that for a full-measure subset of admissible parameters, the linear involution is uniquely ergodic. This almost certain ergodicity for admissible parameters will play a crucial role in  Lemma \ref{veechcriteria}.
\\
 \subsection{Measurable Cocycles}
We adopt the terminology and framework introduced by Avila--Forni in \cite{avila2007weak} for the following discussion.

Let $T$ be a measurable transformation on a probability space $(\Delta, \mu)$. We say that $T$ is \emph{weakly expanding} if there exists a partition of $\Delta$ (modulo zero) into a collection $\{\Delta^{(l)} : l \in \mathbb{Z}\}$ such that, for each $l \in \mathbb{Z}$, the restriction
\[
T^{(l)} := T_{|\Delta^{(l)}} : \Delta^{(l)} \to \Delta
\]
is invertible and the pushforward measure $T^{(l)}_*\mu$ is equivalent to $\mu$.

For any finite word of integers $w = (l_1, \dots, l_n)$, define
\[
\Delta^{(w)} := \{x \in \Delta : T^{k-1}(x) \in \Delta^{(l_k)}, \ 1 \leq k \leq n\},
\]
with the corresponding restriction $T^w := T_{|\Delta^{(w)}}$. Let $\Omega$ denote the set of all finite words. Then $T$ is said to be \emph{strongly expanding} if there exists a constant $K>0$ such that, for every
\[
\nu \in \mathcal{M} := \left\{ \nu = \frac{T_*^{(w)} \mu}{\mu(\Delta^{(w)})} : w \in \Omega \right\},
\]
we have
\[
\frac{1}{K} \leq \frac{d\nu}{d\mu} \leq K.
\]
This is also refered as \emph{bounded distortion property}.
The Hilbert metric on the projective space \(\mathbb{P}^{d-1}\) is defined by
\[
d(x, y) = \sup_{1 \leq i, j \leq d} \left| \log \frac{x_i y_j}{x_j y_i} \right|.
\]
  We define the \emph{standard simplex}, denoted by $\mathbb{P}_+^{d-1}$, as the projectivization of the positive cone $\mathbb{R}_+^d$. 
For a subset \(A \subset \mathbb{P}^{d-1}_+\), the Hilbert metric is well defined if \(A\) is convex, contains a non-empty open set, and does not contain a proper linear subspace.

A transformation is called a \emph{projective contraction} if it arises from the projectivization of a matrix in $GL(d, \mathbb{R})$ with non-negative entries. This ensures that the transformation maps the standard simplex into itself.  

Given a hyperplane $H$, consider its intersection with the standard simplex, $H \cap \mathbb{P}_+^{d-1}$. If there exists a projective contraction $T$ mapping this intersection into itself, we call the image
\[
T(H \cap \mathbb{P}_+^{d-1})
\]
a \emph{generalized simplex}.

The following technical lemma, derived in \cite{avila2007weak}, was originally stated for transformations defined on a simplex. The proof carries over verbatim for generalized simplices. It provides a sufficient condition for a transformation to be strongly expanding:

\begin{lemma}[Lemma 2.1, \cite{avila2007weak}] \label{AFboundeddistortion} 
Let \(\Delta\) be a generalized simplex compactly contained in the intersection of the standard simplex and a hyperplane $H$ such that the Hilbert metric is well defined, and let \(\{\Delta^{(l)} : l \in \mathbb{Z}\} (\mathrm{mod} \, 0)\) be a partition of \(\Delta\) where each \(\Delta^{(l)}\) has positive Lebesgue measure. Suppose \(T : \Delta \to \Delta\) is a measurable transformation such that \(T(\Delta^{(l)}) = \Delta\), and \(T^{(l)} = T_{|\Delta^{(l)}}\) is invertible, with \((T^{(l)})^{-1}\) being the restriction of a projective contraction. Then \(T\) preserves a measure \(\mu\) that is absolutely continuous with respect to the Lebesgue measure, with a density that is a positive continuous function on \(\overline{\Delta}\). Moreover, \(T\) is strongly expanding with respect to \(\mu\).  
\end{lemma}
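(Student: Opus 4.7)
The plan is to reduce everything to a bounded-distortion estimate for projective contractions on compactly contained subsets of the standard simplex, and then use this to produce an invariant density via the transfer operator. Writing a projective contraction $P$ as $[x] \mapsto [Ax]$ for a non-negative matrix $A$, in affine coordinates its Jacobian determinant has the form $c(A)/|Ax|_1^d$, where $c(A)$ depends on $A$ but not on $x$. Since $\Delta$ is compactly contained in the standard simplex, each coordinate of any $x \in \overline{\Delta}$ is bounded below by some $\delta = \delta(\Delta) > 0$; combined with the non-negativity of $A$, this forces $|Ax|_1/|Ay|_1 \in [\delta, \delta^{-1}]$ for all $x, y \in \overline{\Delta}$. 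Consequently there is a constant $C_0 = C_0(\Delta)$, \emph{independent of $A$}, such that
$$\max_{\overline{\Delta}} |\det DP| \leq C_0 \cdot \min_{\overline{\Delta}} |\det DP|.$$

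Since compositions of projective contractions remain projective contractions (their matrices multiply), the same $C_0$ controls the distortion of every inverse branch $(T^{(w)})^{-1}$, uniformly in $w \in \Omega$. Because $T^{(w)}\colon \Delta^{(w)} \to \Delta$ is bijective, this bounded distortion translates to
$$C_0^{-1} \leq \frac{d\nu_w}{dm} \leq C_0, \qquad \nu_w := \frac{T^{(w)}_*(m|_{\Delta^{(w)}})}{m(\Delta^{(w)})},$$
where $m$ is normalized Lebesgue measure on $\Delta$. This is already the strong-expansion property relative to $m$; the remaining work is to pass from $m$ to the invariant measure $\mu$.

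To produce $\mu$, I would analyze the transfer operator
$$\mathcal{L}\rho(x) = \sum_{l} |\det D(T^{(l)})^{-1}(x)| \cdot \rho\bigl((T^{(l)})^{-1}(x)\bigr)$$
acting on $C(\overline{\Delta}, \mathbb{R}_+)$. The series converges uniformly: the distortion bound gives $|\det D(T^{(l)})^{-1}(x)| \leq C_0 \, m(\Delta^{(l)})/m(\Delta)$ for every $x$, and $\sum_l m(\Delta^{(l)}) = m(\Delta)$. The operator preserves the cone $\{\rho : C_0^{-1} \leq \rho \leq C_0\}$; extracting either a Cesàro limit of $\mathcal{L}^n \mathbf{1}$ or applying Schauder's fixed-point theorem to a suitably compact sub-cone yields a continuous fixed point $\rho_\infty$ with $C_0^{-1} \leq \rho_\infty \leq C_0$. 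Strong expansion with respect to $\mu = \rho_\infty \, dm$ then follows from the analogous bound with respect to $m$, since $\mu$ and $m$ are comparable with ratio in $[C_0^{-1}, C_0^2]$.

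The main technical obstacle is upgrading the preserved cone to one on which a fixed-point argument genuinely applies, i.e., identifying an explicit modulus of continuity (or Hölder norm) that $\mathcal{L}$ maps into a precompact set of $C(\overline{\Delta})$. Here the compact containment of $\Delta$ in the standard simplex is essential: on $\overline{\Delta}$ each inverse branch is smooth and bounded away from the boundary, so $\mathcal{L}$ regularizes. The cleanest implementation combines the bounded distortion above with a contraction of the Hilbert (Birkhoff) metric on the positive cone of continuous functions, yielding exponential convergence $\mathcal{L}^n \mathbf{1} \to \rho_\infty$ in $C^0(\overline{\Delta})$, and thereby delivering both continuity and strict positivity of $\rho_\infty$ on the closed simplex.
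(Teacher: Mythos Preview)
The paper does not actually prove this lemma: it is quoted verbatim as Lemma~2.1 of Avila--Forni \cite{avila2007weak} and used as a black box, so there is no proof in the paper to compare against. Your outline is essentially the standard argument (and is in the spirit of what Avila--Forni do): the crux is exactly the uniform bounded-distortion estimate you isolate, namely that for a projective contraction $P=[x]\mapsto[Ax]$ the Jacobian in affine coordinates is $c(A)/|Ax|_1^{d}$, and compact containment of $\Delta$ in $\mathbb{P}_+^{d-1}$ forces $|Ax|_1/|Ay|_1$ to lie in a fixed compact interval independent of $A$. From there the transfer-operator construction of the invariant density is routine.

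One small caution on your last paragraph: the Hilbert--Birkhoff contraction argument requires that $\mathcal{L}$ map the whole positive cone of $C(\overline{\Delta})$ into a sub-cone of finite projective diameter, not merely that it preserve $\{C_0^{-1}\le\rho\le C_0\}$. This does hold here, because for \emph{any} positive continuous $\rho$ one has $\mathcal{L}\rho(x)/\mathcal{L}\rho(y)\le C_0^2$ by the same distortion bound applied termwise, so the image of $\mathcal{L}$ has uniformly bounded oscillation ratio; you should state this explicitly rather than only checking it on the constant function. Alternatively, the Ces\`aro-limit route you mention avoids the cone machinery entirely: the sequence $\frac{1}{N}\sum_{n<N}\mathcal{L}^n\mathbf{1}$ stays in $\{C_0^{-1}\le\rho\le C_0\}$ and is equicontinuous (the distortion bound also controls the modulus of continuity uniformly), so Arzel\`a--Ascoli plus a diagonal argument extracts a continuous fixed point directly.
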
  

\subsubsection{Cocycles} 
Cocycles naturally arise in dynamical systems to describe how additional structure evolves along trajectories of a transformation. Let $T : \Delta \to \Delta$ be a measurable transformation. A \emph{cocycle} is a pair $(T, A)$, where $A : \Delta \to SL(d, \mathbb{R})$ assigns a matrix in the special linear group to each point of $\Delta$. The cocycle governs the evolution of vectors in $\mathbb{R}^d$:
\[
(T, A)(x, v) = (T(x), A(x) \cdot v), \quad \text{for } (x, v) \in \Delta \times \mathbb{R}^d.
\]

Iterating the cocycle $n$ times gives
\[
(T, A)^n(x, v) = (T^n(x), A^n(x) \cdot v),
\]
where
\[
A^n(x) = A(T^{n-1}(x)) \cdots A(x)
\]
is the product of matrices along the trajectory of $x$.

If $(\Delta, \mu)$ is a probability space and $T$ is ergodic with respect to $\mu$, we say the cocycle is \emph{measurable} if
\[
\int_\Delta \log \|A(x)\| \, d\mu(x) < \infty.
\]
A stronger condition defines a \emph{uniform cocycle}, requiring
\[
\int_\Delta \log \max\{\|A(x)\|, \|A^{-1}(x)\|\} \, d\mu(x) < \infty.
\]

These integrability conditions allow us to define invariant structures associated with the cocycle, which are central to its dynamical analysis. In particular, for measurable cocycles, the long-term behavior of vectors under iteration is captured by the growth of $\|A^n(x)\cdot v\|$. This leads to the definition of key invariant subspaces:

\begin{itemize}
    \item The \emph{stable space}:
    \[
    E^s(x) = \{v \in \mathbb{R}^d : \lim_{n \to \infty} \|A^n(x)\cdot v\| = 0\}.
    \]

    \item The \emph{central stable space}:
    \[
    E^{cs}(x) = \{v \in \mathbb{R}^d : \limsup_{n \to \infty} \|A^n(x)\cdot v\|^{1/n} \leq 1\}.
    \]

    \item The \emph{weak stable space}:
    \[
    W^s(x) = \left\{ v \in \mathbb{R}^d : \lim_{n \to \infty} \|A^n(x)\cdot v\|_{\mathbb{R}^d/\mathbb{Z}^d} = 0 \right\},
    \]
    where $\|\cdot\|_{\mathbb{R}^d/\mathbb{Z}^d}$ denotes the distance in the quotient space $\mathbb{R}^d/\mathbb{Z}^d$.
\end{itemize}

These subspaces satisfy the following equivariance properties:
\[
A(x)(E^s(x)) = E^s(T(x)) \quad \text{and} \quad A(x)(E^{cs}(x)) = E^{cs}(T(x)), \quad \text{for almost every } x \in \Delta.
\]

Cocycles can be further classified according to their structure. The cocycle $(T, A)$ is \emph{locally constant} if $T$ is strongly expanding and, for each $l \in \mathbb{Z}$, the restriction $A_{|\Delta^{(l)}}$ is constant. If, in addition, $A(x) \in SL(d, \mathbb{Z})$ for almost every $x$, the cocycle is said to be \emph{integral}.

Finally, consider compact subsets $\Theta \subseteq \mathbb{P}^{d-1}$ of the projective space. We say that $\Theta$ is \emph{adapted} to the cocycle $(T, A)$ if, for every nonzero vector $w \in \mathbb{R}^d$ whose projectivization lies in $\Theta$, the following hold:
\begin{itemize}
    \item $\|A^n(x)\cdot w\| \geq \|w\|$ for all $n \geq 0$,
    \item $\lim_{n \to \infty} \|A^n(x) \cdot w\| = \infty$.
\end{itemize}

The set of lines in $\mathbb{R}^d$ not parallel to elements of $\Theta$ is denoted by $\mathcal{J}(\Theta)$. Adapted sets are a useful tool for studying the growth behavior of cocycles and their geometric implications.

\subsection{Kontsevich-Zorich Cocycle}

Consider the unit area strata of orientable quadratic differentials $\mathcal{M}^{(1)}_\kappa$ and the diagonal flow $g_t$. The Kontsevich-Zorich cocycle $G_t$ is defined as the quotient cocycle
\[
 g_t \times \text{id} : \mathcal{M}_\kappa^{(1)} \times H^1(S, \mathbb{R}) \to \mathcal{M}_\kappa^{(1)} \times H^1(S, \mathbb{R})
\]
quotiented by the mapping class group $\Gamma_g$. We can define the same cocycle for the non-orientable quadratic differentials restricting to $\mathcal{Q}_\kappa^{(1)}$.\\

 We take the Kontsevich-Zorich cocycle, since this cocycle is log-integrable with respect to the Lebesgue measure (see \cite{kontsevich1997lyapunov}), the Oseledets multiplicative ergodic theorem implies that for $\mu$-almost every point $q \in \mathcal{Q}_\kappa^{(1)}$ and every non-zero vector $v \in H^1(S, \mathbb{R})$, if $\mathcal{K}$ is the action on the fibers the following limit exists:
\[
\lim_{n \to \infty} \frac{1}{n} \log \frac{\|\mathcal{K}^n(q) v\|}{\|v\|}.
\]

   The possible values for this limit, counted with multiplicity, which are known as the \emph{Lyapunov exponents}  describe the exponential rate of expansion (positive values) or contraction (negative values). The Lyapunov exponents satisfy the following inequalities:
\[
1 = \lambda_1 \geq \lambda_2 \geq \cdots \geq \lambda_g \geq 0 \geq \lambda_{g+1} \geq \cdots \geq \lambda_{2g} = -1.
\]

Moreover, there exists a decomposition of $H^1(S, \mathbb{R})$:
\[
H^1(S, \mathbb{R}) = E_1 \supseteq E_2 \supseteq \cdots \supseteq E_{2g} \supseteq \{0\},
\]
such that for every $v \in E_i \setminus E_{i+1}$, the following holds:
\[
\lim_{n \to \infty} \frac{1}{n} \log \frac{\|\mathcal{K}^n(q) v\|}{\|v\|} = \lambda_i.
\]

The dimension of $E_i$ corresponds to the multiplicity of $\lambda_i$. Additionally, since the Kontsevich-Zorich cocycle is symplectic, it follows that $\lambda_{-i} = -\lambda_i$. \\

Consider a quadratic differential \(\omega\) obtained from a non-orientable quadratic differential \(q \in \mathcal{Q}_\kappa^{(1)}\) using the double cover construction. The Kontsevich-Zorich cocycle can be defined on \(\mathcal{M}_{\hat{\kappa}}^{(1)}\) and applied to \(\omega\), with the action restricted to the splitting  
\[ H^1(R, \mathbb{R}) = H^+(R) \oplus H^-(R). \]  
It can be verified that the cocycle preserves this splitting. Consequently, on each subspace, one defines a measurable symplectic cocycle, leading to the existence of Lyapunov exponents for the two components.  \\

For the \(H^+(R, \mathbb{R})\) subspace, the Lyapunov exponents are  
\[
\lambda^+_1 \geq \cdots \geq \lambda^+_g \geq 0 \geq \lambda^+_{g+1} \geq \cdots \geq \lambda^+_{2g},
\]  
and for the \(H^-(R, \mathbb{R})\) subspace, they are  
\[
\lambda^-_1 \geq \cdots \geq \lambda^-_{g+s/s-1} \geq 0 \geq \lambda^-_{g+s/s} \geq \cdots \geq \lambda^-_{2g+s-2}.
\]  

By the work of R. Treviño (see \cite{trevino2013non}), for every connected component of a stratum, it holds that \(\lambda_g^+ > 0\) and \(\lambda_{g+s/2-1}^- > 0\). We are  interested in the restriction of the Kontsevich-Zorich cocycle to the minus part of the splitting. \\

In Section 4 of \cite{belldiagonal}, there is a description of the zippered rectangle construction and introduce what they call \emph{singularity parameters} along with curves \(c_\alpha\) designed for each rectangle \(R_\alpha\). These curves are chosen such that their homology classes span the minus part of the splitting. Moreover, with respect to this basis, the matrix representing the change induced by the Kontsevich-Zorich cocycle is precisely the Rauzy--Veech matrix (see Section 7.11 of \cite{belldiagonal}).  \\

The central result of \cite{belldiagonal} is summarized in the following theorem:  
\begin{thm}  \label{ThmZariskiDense}
    (\cite{belldiagonal})  
    The plus and minus symplectic Rauzy--Veech groups associated with every connected component of a stratum of quadratic differentials are Zariski dense in their respective ambient symplectic spaces.  
\end{thm}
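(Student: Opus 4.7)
My plan is to follow the standard strategy for establishing Zariski density of a matrix monoid inside a classical group. Fix a connected component $\mathcal{C}$ of a stratum and a distinguished vertex $\pi_0$ in its Rauzy diagram, form the monoid $M^{\pm}(\mathcal{C})$ generated by the Rauzy--Veech matrices $B^{\mathcal{R}}_\gamma$ associated with loops $\gamma$ based at $\pi_0$ acting on $H^{\pm}$, and let $G^{\pm}$ denote its Zariski closure inside the ambient $\mathrm{Sp}(H^{\pm})$. Since $G^{\pm}$ is an algebraic subgroup, one would rule out each maximal proper algebraic subgroup in turn; a convenient organizing principle is the standard trichotomy: reducible subgroups, reductive subgroups preserving a non-trivial symplectic/tensor/exterior decomposition, and a short list of exceptional embeddings.

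The first step is to prove irreducibility of $M^{\pm}$ on $H^{\pm}$. For this I would exploit the basis of singularity curves $c_\alpha$ from the zippered rectangle construction (Section 4 of \cite{belldiagonal}), which is adapted so that the Rauzy--Veech matrices act by the explicit combinatorial formula recalled in the excerpt. One then locates a small collection of loops whose matrices, taken together, admit no common proper invariant subspace: concretely, one finds loops whose non-zero off-diagonal entries, accumulated over the group, link every pair of basis curves, which forces irreducibility. The next step is to rule out an $M^{\pm}$-invariant symplectic direct sum decomposition of $H^{\pm}$, which is again a combinatorial verification via explicit Rauzy--Veech loops coupling any two putative blocks. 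Together these exclude all reducible Zariski closures.

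Excluding the tensor-product, exterior-power and exceptional-group candidates among the remaining maximal proper subgroups would be handled by a dimension count combined with spectral information. For generic strata, the dimension of $H^{\pm}$ is not a product or binomial coefficient of smaller dimensions in a way consistent with such factorizations, and for the few borderline dimensions one produces an explicit Rauzy--Veech element whose characteristic polynomial is irreducible over $\mathbb{Q}$, or whose Galois group is as large as possible, thereby obstructing the smaller algebraic closure. This is the same philosophy used to upgrade simplicity-of-spectrum results (via the Avila--Viana pinching/twisting criterion) into Zariski density in symplectic groups.

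The main obstacle, and the reason the proof in \cite{belldiagonal} is lengthy, is that Lanneau's classification produces several connected components per stratum (principal, hyperelliptic, and certain exceptional components in low genus), and each component's Rauzy diagram must be examined separately to exhibit the loops that rule out every proper subgroup. The hyperelliptic components are particularly delicate because the action factors through the hyperelliptic involution, which cuts down the effective ambient symplectic space, so one must first identify the correct ambient space before attempting Zariski density in it. I would therefore address the non-hyperelliptic components first with a uniform combinatorial argument using the connectivity of the Rauzy diagram, and dispatch the finitely many hyperelliptic and exceptional low-genus components by direct computation of a few explicit Rauzy--Veech loops.
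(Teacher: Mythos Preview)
The paper does not prove this theorem. It is quoted verbatim as a result of \cite{belldiagonal} and used as a black box: the authors invoke Zariski density only to guarantee the existence of twisting matrices in the proof of Theorem~\ref{avilaf} and to import simplicity of the Lyapunov spectrum via the Avila--Viana criterion. There is therefore nothing in this paper to compare your proposal against.

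As for the proposal itself as a sketch of what \cite{belldiagonal} actually does: your outline captures the broad philosophy (show the Zariski closure cannot sit in any maximal proper algebraic subgroup of the symplectic group, treat connected components case by case), but the concrete mechanism in \cite{belldiagonal} is different from the ad~hoc ``exclude reducible / tensor / exceptional'' checklist you describe. Their argument proceeds by exhibiting, inside the Rauzy--Veech monoid, explicit \emph{Dehn twists} (transvections) along homology classes, and then showing that the set of these classes is rich enough---essentially that their symplectic complements generate the whole space in a controlled way---to force the group generated by the transvections to be all of $\mathrm{Sp}$. This is closer in spirit to the classical fact that symplectic transvections along a spanning, suitably connected set of vectors generate the full symplectic group, and it bypasses the Galois-group and dimension-count arguments you propose for the tensor/exterior cases. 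Your suggestion to produce an element with irreducible characteristic polynomial would work in principle but is not how the cited paper proceeds, and in practice it is harder to implement uniformly across all components than the transvection approach.
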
  

This result has significant implications for the study of the weak mixing property of linear involutions arising from half-translation surfaces. By applying the Avila-Viana criterion for Lyapunov simplicity \cite{avila2007simplicity}, it follows that the Rauzy--Veech cocycle for half-translation surfaces is simple, meaning that all Lyapunov exponents are pairwise distinct. This simplicity allows for a precise determination of the dimensions of the stable and central stable spaces of the cocycle. Furthermore, it ensures the existence of both \emph{pinching and twisting} matrices within the associated symplectic Rauzy--Veech groups.  \\
\subsection{Rauzy-Veech Cocycle}

For any irreducible generalized permutation we can consider its parameter space and intersect it with $\mathbb{P}_+^{d-1}$, and define this intersection as $W_\pi$. Observe that Rauzy-Veech induction commutes with dilations, namely for every $r>0$ it is true that
$$\mathcal{Q}(\pi,r\lambda)=(\pi^{(1)},r\lambda^{(1)}),$$
this implies we can define  a projectivization of this map called Rauzy renormalization
$$\mathcal{R}:\{\pi\}\times W_\pi\to \{\pi^{(1)}\}\times W_{\pi^{(1)}}.$$
     Consider a path by Rauzy--Veech moves
    $$(\pi,\lambda)\to (\pi^{(1)},\lambda^{(1)})\to ...\to (\pi^{(n)},\lambda^{(n)}).$$
 We can forget for one moment what the induction does on parameters $\lambda$ and observe the change it generates to the generalized permutation $$\pi \underset{\gamma_1}{\rightarrow}\pi^{(1)}\rightarrow...\underset{\gamma_n}{\rightarrow}\pi^{(n)}$$
    and associate to the path concatenation $\gamma=\gamma_1\cdot ...\cdot \gamma_n$ a matrix $B_\gamma=B_{\gamma_n}\cdot...\cdot B_{\gamma_1}$.\\

 Take $\pi$ a generalized permutation, and a cycle $\gamma_0:\pi \to...\to \pi$ such that all the entries of the associated matrix $B_{\gamma_0}$ are positive (\cite{gadre2012dynamics}). Define $\Delta_\pi\subset W_\pi$ as the projectivization of $B_{\gamma_0}(W_\pi)$. Consider the set of all primitive cycles (that is, $\pi$ only appears once) $\gamma$ for $\pi$ and its matrices $B_\gamma$. This defines a countable partition of $\Delta_\pi$ such that for any cycle $\gamma:\pi\to...\to\pi$ defining $\Delta^{(\gamma)}$ as the projectivization of $B_\gamma(\Delta_\pi)$. This is the set of parameters in $\Delta_\pi$ such that they return to $\Delta_\pi$ with the renormalization following the cycle $\gamma$. And observe that the projectivization of $(B_\gamma)^{-1}(\Delta_\gamma) $ is $\Delta_\pi$.
 Define $T:\Delta_\pi\to \Delta_\pi$ as: if $\lambda\in \Delta_\gamma$ then $$T(\lambda)=\frac{(B_\gamma)^{-1}\cdot \lambda}{||(B_\gamma)^{-1}\cdot \lambda||}.$$
 We  define $A:\Delta_\pi\to SL(d,\mathbb{R})$, given by $A_{|\Delta_\gamma},\lambda\to B_\gamma^*$.
  Therefore defines a skew linear product on $\Delta_\pi\times \mathbb{R}^d$
    $$(T,A)(\lambda,w)=(\frac{B_\gamma^{-1}\cdot \lambda}{||B_\gamma^{-1}\cdot \lambda||},B_\gamma^*\cdot w).$$
     This satisfies $$(T,A)^n=(T^n,A(T^{n-1})\cdot...\cdot A).$$
     This defines a cocycle called \emph{Rauzy--Veech cocycle}. We define the set $\Delta_\pi^n$ as the subset of $\Delta$ on which $T^n$ is well-defined. The set of full measure is then given by
\[
\Delta_\pi^\infty \coloneqq \bigcap_{n \geq 0} \Delta_\pi^n.
\]

 \begin{lemma}
     There exists a measure $\mu$ on $\Delta$ which is $T$-invariant and absolutely continuous respect Lebesgue which has bounded distortion and such that the cocycle $(T,A)$ is uniform.
 \end{lemma}
     \begin{proof}
         The bounded distortion property can be checked by Lemma \ref{AFboundeddistortion}. It is relevant to see that every $W_\pi$ is a generalized simplex where you can define de Hilbert metric, this is treated in detail in Appendix D \cite{belldiagonal}. Another reference to check this fact is a  consequence of Lemma 6.18 in \cite{belldiagonal}. The fact that the cocycle $(T,A)$ is uniform follows from Lemma 6.39 in \cite{belldiagonal}.
     \end{proof}
        For an irreducible generalized permutation $\pi$ in an alphabet of $d$ letters there exists a symplectic subspace $H(\pi)$ of $\mathbb{R}^d$ with dimension $2g+n-2$ where $g$ is the genus of the half-translation surface that it generates and $n$ is the number of \textit{non-orientable} singularities.
        If  $\gamma$ is a cycle $\pi\to...\to\pi$ the matrix $$(B_\gamma)^*_{|H(\pi)}:H(\pi)\to H(\pi)$$ 
        is a symplectic isomorphism, see for example Section 7.11 \cite{belldiagonal}.\\

      Considering a double orientation cover it can be seen that $H(\pi)\subseteq H^-(R,\mathbb{R})$. If $k$ is the number of intervals for IETs that we can get from the double orientation cover,  in \cite{veech1984metric} we find that the Rauzy--Veech cocycle (for IETs) in $\mathbb{R}^k/H^+(R,\mathbb{R})\oplus H^-(R,\mathbb{R})$ is isometric, from this we conclude  that the Rauzy--Veech cocycle (for linear involutions) in $\mathbb{R}^d/ H(\pi)$ is also isometric, therefore the \textit{interesting} action of the Rauzy--Veech cocycle occurs in $H(\pi)$. Namely,  by Oseldets theorem for $\mu$-almost any $\lambda\in W_\pi$ and every non-zero $w\in H(\pi)$
    $$\lim_{n\to\infty}\frac{1}{n}\log \frac{||A_n(\lambda)\cdot w||}{||w||}$$ exists.
     In particular there exists Lyapunov exponents
    $$\lambda_{1}\geq ...\geq\lambda_{g+n/2-1}\geq 0\geq \lambda_{g+n/2}\geq...\geq \lambda_{2g+n-2}.$$
    
 The relationship between the Rauzy--Veech cocycle and the Kontsevich--Zorich cocycle for half-translation surfaces is explained in Section~7.16 of \cite{belldiagonal}. In Section 6 \cite{belldiagonal}, the authors study a suspension flow over the Rauzy--Veech cocycle such that, for any irreducible generalized permutation $\pi$, the set $\Delta_\pi \times \{\pi\}$ forms a cross-section to this flow. They investigate the return time function $r_\pi$ associated to this section, and consider the average return time with respect to a measure $\mu$ for which the cocycle has strong distortion properties:
\[
r_{\mathrm{av}} = \int_{\Delta_\pi} r_\pi(\lambda) \, d\mu.
\]
In \cite{belldiagonal}, it is shown that the Lyapunov exponents of the cocycle for linear involutions are related to the negative Lyapunov exponents of the Kontsevich--Zorich cocycle by the identity
\[
\lambda_i^{\mathrm{LI}} = r_{\mathrm{av}} \cdot \lambda_i^-.
\]

As a consequence, the Rauzy--Veech cocycle for linear involutions is also simple, meaning that all its Lyapunov exponents are pairwise distinct.  
 Moreover as consequence of Theorem \ref{ThmZariskiDense} the monoid of matrices that form the Rauzy--Veech cocycle restricted to $H(\pi)$ is Zariski dense in the ambient space.

\section{Proof of Main Result}
Recall that for a transformation \(T : X \to X\), weak mixing can be characterized with spectral properties: the associated Koopman operator admits no non-constant eigenfunctions, and  the only eigenfunctions correspond to the eigenvalue \(1\). This is often described as having purely continuous spectrum.

For linear involutions, this can be stated as follows: the map \(T(\pi, \lambda)\) is weakly mixing if there is no non-constant measurable function \(f : \overline{X} \to \mathbb{C}\) such that, for some \(v \notin \mathbb{Z}^d\),
\begin{equation} \label{eigenfunction}
    f(T(x)) = e^{2\pi i v_j} f(x)
\end{equation}
holds for every \(x \in X_\alpha \cup X_{\sigma(\alpha)}\) and all \(\alpha \in \mathcal{A}\).

If \(T(\pi, \lambda)\) is ergodic but not weakly mixing, then there exists a non-zero measurable function \(f : \overline{X} \to \mathbb{C}\) and a scalar \(t \notin \mathbb{Z}\) such that
\begin{equation} \label{ergodicnonweakmixing}
    f(T(x)) = e^{2\pi i t} f(x)
\end{equation}
for all \(x \in X_\alpha \cup X_{\sigma(\alpha)}\) and every \(\alpha \in \mathcal{A}\).

The following result generalizes Lemmas 7.2--7.3 in Veech~\cite{veech1984metric}, and we refer to it as the \emph{Veech criterion for linear involutions}. Although its validity follows from standard arguments based on Luzin’s theorem, we present a proof following the strategy in Veech’s proof of Lemma 7.2 for interval exchange transformations, for expository clarity. This lemma serves as a bridge between vectors satisfying~\eqref{ergodicnonweakmixing} and the weak stable space of the Rauzy--Veech cocycle.

\begin{lemma} \label{veechcriteria}
    Let $\pi$ be a dynamically irreducible generalized permutation on an alphabet $\mathcal{A}$ of $d$ elements.  For a.e $\lambda\in W_\pi$ , if there exists an neighborhood of  $\mathcal{U}_{\mathcal{R}}\subseteq W_\pi$  of $\lambda$ and  an infinite set $E\subseteq \mathbb{N}$ such that for every $n\in E$ it is true that $\mathcal{R}^n(\pi,\lambda)\in \mathcal{U}_{\mathcal{R}}$ and
    if there exists a non trivial measurable function $f:\overline{X}\to \mathbb{C}$ such that  there exists a $v\in \mathbb{R}^{d}$ such that \begin{equation} \label{solution}
         f(T(x))=e^{2\pi i v_\alpha}f(x),
    \end{equation}
    for every $x\in X_{\alpha},X_{\sigma(\alpha)}$ and for every $\alpha\in \mathcal{A}$ then 
    $$\lim_{n\in E\to \infty}||(B^n)^*(\pi,\lambda)\cdot v||_{\mathbb{R}^{d}/\mathbb{Z}^{d}}=0.$$
\end{lemma}
\begin{proof}
 Without loss of generality, we may assume that $T$ is an ergodic linear involution \cite{gadre2012dynamics}. Suppose that $T$ satisfies the statement in Lemma \ref{veechcriteria}. Let $f$ be a measurable solution to 
\[
f(T(x)) = e^{2\pi i v_\alpha} f(x), \quad \text{for every } x \in X_\alpha, X_{\sigma(\alpha)}.
\]

We use the fact that the renormalization process for linear involutions is ergodic (Theorem 13.1 in \cite{gadre2012dynamics}). Then, it is possible to show that there exists an infinite set $E \subseteq \mathbb{N}$ and a constant $\epsilon > 0$ such that for every $n \in E$, the following hold:

\begin{itemize}
    \item There exists $k_n \geq \frac{\epsilon |\lambda|}{|\lambda^{(n)}|}$ such that for every $k \leq k_n$, the iterates $T^k(\overline{X^{(n)}})$ consist of at most two intervals sharing at most one extremal point.
    
    \item For every $j$, it holds that $|X_j^{(n)}| \geq \epsilon |\lambda^{(n)}|$.
\end{itemize}

It can be deduced from Section 10 \cite{gadre2012dynamics} that these distortion conditions are satisfied if, along the Rauzy--Veech path
\[
(\pi, \lambda) \to (\pi^{(1)}, \lambda^{(1)}) \to \cdots \to (\pi^{(n)}, \lambda^{(n)}),
\]
there exists a stage where the matrix $B^n(\pi, \lambda)$ has all entries strictly positive and the corresponding combinatorial type $\pi^{(n)}$ recurs infinitely often.

To ensure this, one can choose a path $\gamma_0$ of length $|\gamma_0| = n$ sufficiently large so that all entries of $B_{\gamma_0}$ are positive and $\Delta_\pi \subseteq \mathcal{U}_{\mathcal{R}}$. Define the Rauzy--Veech cocycle over $\Delta_\pi$ as explained before.

Consider the full measure subset $\Delta_\pi^\infty$ consisting of elements whose Rauzy--Veech path includes a return to $\Delta_\pi$ along a cycle $\gamma_0$ with strictly positive matrix $B_{\gamma_0}$, and for which the generalized permutation $\pi^{(\gamma_0)}$ occurs infinitely often. This implies that the set of parameters in $\Delta_\pi$ for which $T(\pi, \lambda)$ fails to satisfy these conditions has zero measure.

Since Rauzy--Veech induction is expanding, every point in $W_\pi$ eventually enters $\Delta_\pi$ under forward iteration. Therefore, the same measure bound holds for the full parameter space $W_\pi$, and we observe that
\[
\Big|\bigcup_{j=0}^{k_n-1} T^j(\overline{X^{(n)}})\Big| \geq \epsilon |\lambda|.
\]

By ergodicity of $\mathcal{R}$, as $n \to \infty$ in $E$, we have $|\lambda^{(n)}| \to 0$. Also, by ergodicity of $T$ and since $k_n \to \infty$, the sets
\[
\bigcup_{k=0}^{k_n} T^k(\overline{X^{(n)}})
\]
become equidistributed in $\overline{X}$. By Luzin's theorem, $f$ can be approximated on compact sets by continuous functions. Hence, there exists $n$ sufficiently large such that for some $k \leq k_n$, there exist constants $z_k^1, z_k^2$ satisfying
\[
|f(x) - z_k^i| < \delta, \quad \text{for all } x \in X^{(n)} \times \{i\}.
\]
Consequently,
\[
\int_{T^k(X^{(n)} \times \{i\})} |f(x) - z_k^i| \, d\mu < \delta |\lambda^{(n)}|.
\]

Recall that $T^k(X_i^{(n)})$ are intervals. By the cocycle property of $f$ with respect to $T$, we have
\[
f(T^k(x)) = \prod_{m=0}^{k} e^{2\pi i j(m)} f(x),
\]
where $j(m)$ is the nonzero entry of the vector $w(m) \in \mathbb{R}^d$ defined by $w_l(m) = v_l \chi_{X_l}(T^m(x))$. It follows that $\prod_{m=0}^k e^{2\pi i j(m)}$ is constant, so there exist two complex numbers $z_0, z_1$ such that
\[
\int_{X^{(n)} \times \{i\}} |f(x) - z_i| \, d\mu < \delta |\lambda^{(n)}|.
\]
By Chebyshev's inequality, we obtain
\[
\mu\big(\{x \in X^{(n)} \times \{i\} : |f(x) - z_i| \geq \sqrt{\delta}\}\big) \leq \sqrt{\delta} |\lambda^{(n)}|.
\]

Using the interpretation of the Veech zippered rectangle construction for first return times, consider the renormalization matrix $B^n(\pi, \lambda)$ and the dual matrix $(B^n(\pi, \lambda))^*$ as the visiting matrix. For each $j \leq d$, define
\[
(B_j^n)^* = \sum_{i=1}^d (B^n)^*_{ij},
\]
and let $r_{j,n} \geq (B_j^n)^*$ such that for every $x \in X_j^{(n)} \subseteq X^{(n)} \times \{i\}$, we have
\[
T^{r_{j,n}}(x), \, T^{r_{j,n} - (B_j^n)^*}(x) \in X^{(n)} \times \{i\}, \quad i \in \{0,1\}.
\]

Taking $\delta < \frac{\epsilon^2}{4}$, for each $1 \leq j \leq d$, there exists $x \in X_j^{(n)}$ such that
\[
|f(T^{r_{j,n} - (B_j^n)^*}(x)) - z_i|, \, |f(T^{r_{j,n}}(x)) - z_i| < \sqrt{\delta}.
\]

Such $x$ must exist; otherwise,
\[
\epsilon |\lambda^{(n)}| \leq |X_j^{(n)}| \leq \mu\Big(\{x \in X_j^{(n)} : |f(T^{r_{j,n}-(B_j^n)^*}(x)) - z_i| \geq \sqrt{\delta}\}\Big) + \mu\Big(\{x \in X_j^{(n)} : |f(T^{r_{j,n}}(x)) - z_i| \geq \sqrt{\delta}\}\Big)\] \[ < 2 \sqrt{\delta} |\lambda^{(n)}| < \epsilon |\lambda^{(n)}|.
\]

Finally, by ergodicity of $T$, we may assume that almost everywhere $|f| = 1$. By the triangle inequality, this implies
\[
|e^{2\pi i (v (B^n)^*_j)} - 1| \leq 2 \sqrt{\delta},
\]
which concludes the proof.

\end{proof}
The following theorem is a generalization of Theorem 5.1 in Avila--Forni \cite{avila2007weak}. It is a consequence of the fact that the monoid generated by the Kontsevich-Zorich cocycle (and also the Rauzy--Veech cocycle) is Zariski dense, and therefore it is \textit{pinching} and \textit{twisting}.
\begin{thm} \cite{avila2007weak}\label{avilaf}
    Let $\pi$ be a generalized permutation, such that it codes the vertical foliation on a  half-translation surface of $g>1$. Let $L\subseteq H(\pi)$ be a line not passing through zero. If the central stable space of the Rauzy--Veech cocycle $E^{cs}$ is such that $dim(E^{cs})<2g +n/2-3$ then for almost any $[\lambda]\in W_\pi$, it is true that $L\cap E^{cs}([\lambda],\pi)=\emptyset$.
\end{thm}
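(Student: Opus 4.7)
The plan is to mirror the elimination scheme of Theorem 5.1 in Avila--Forni \cite{avila2007weak}, with the essential algebraic input---Zariski density of the Rauzy--Veech monoid, and hence pinching and twisting---provided for the minus-part cocycle by \cite{belldiagonal}.

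First I would reformulate the claim on the Grassmannian. Let $k = \dim E^{cs}$ and consider the measurable section $[\lambda] \mapsto E^{cs}([\lambda],\pi) \in \mathrm{Gr}(k, H(\pi))$. The condition that the fixed affine line $L$ (not through zero) intersects a $k$-dimensional linear subspace is a closed algebraic condition cutting out a proper subvariety $Z_L \subset \mathrm{Gr}(k, H(\pi))$ of codimension $\dim H(\pi) - 1 - k \geq 1$ under the hypothesis $k < 2g-1$. The bad parameter set is then the preimage of $Z_L$ under this section, and the goal is to show it has zero measure.

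Next I would run the elimination, assuming for contradiction that the bad set has positive measure. The proof relies on two ingredients. The first is that Rauzy--Veech induction is strongly expanding in the sense of Lemma 2.1 of \cite{avila2007weak}, together with the equivariance $\mathcal{B}^\mathcal{R}_n([\lambda],\pi) \cdot E^{cs}([\lambda],\pi) = E^{cs}(\mathcal{R}^n[\lambda],\pi)$. The second is that, by Zariski density of the Rauzy--Veech monoid acting on $H^-(R,\mathbb{R})$, this monoid contains a pinching matrix $A_{\mathrm{pin}}$ with simple real spectrum and a twisting matrix $A_{\mathrm{tw}}$ in generic position relative to the eigenflag of $A_{\mathrm{pin}}$. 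Iterating $A_{\mathrm{pin}}$ drives $E^{cs}$ toward a distinguished $k$-plane---the span of its top $k$ eigenspaces---while $A_{\mathrm{tw}}$ can be applied to rotate this attractor through positions generic with respect to $L$. Combining these two ingredients along a positive-measure set of trajectories, one is forced to have $E^{cs}$ simultaneously sitting inside $Z_L$ and passing through positions disjoint from $Z_L$, producing the contradiction.

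The main obstacle will be adapting the Avila--Forni framework, originally developed for the Kontsevich--Zorich cocycle on translation surfaces, to the minus-part Rauzy--Veech cocycle for linear involutions: the ambient symplectic space is $H^-(R,\mathbb{R})$ rather than the full homology of a translation surface, and the symplectic form is the restriction of the standard one, so the dimensions and the structure of the relevant flag variety differ. However, the Avila--Forni argument was formulated abstractly for integral, locally constant, uniform, pinching, twisting cocycles. Integrality and local constancy are manifest from the definition of $\mathcal{B}^\mathcal{R}$ in the preliminaries, and uniformity together with pinching and twisting follow from \cite{belldiagonal} combined with the Avila--Viana criterion \cite{avila2007simplicity}. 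Once these hypotheses are verified in the linear-involution setting, the elimination argument should carry over without structural modification, leaving only the careful bookkeeping of how the affine line $L$ transforms under the cocycle as the nontrivial step.
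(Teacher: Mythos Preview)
Your proposal is correct in spirit and would succeed, but it takes a more elaborate route than the paper. The paper's proof follows Avila--Forni's Theorem 5.1 quite directly and uses \emph{only} the twisting property: from Zariski density of the minus-part Rauzy--Veech monoid \cite{belldiagonal} one extracts a finite family of twisting matrices that send the direction vector of the fixed affine line $L$ to linearly independent directions; by equivariance of $E^{cs}$ and a positive-measure assumption, these independent directions are all forced into $E^{cs}$, making its codimension in $H(\pi)$ at most $1$, which contradicts $\dim E^{cs} < 2g-1$. Your approach instead packages the problem as a subvariety $Z_L$ of the Grassmannian and invokes both pinching and twisting to drive the section $[\lambda]\mapsto E^{cs}([\lambda])$ to generic positions off $Z_L$---this is closer in flavor to the Avila--Viana simplicity machinery than to the original Avila--Forni elimination. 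What the paper's approach buys is economy: no pinching matrix, no Grassmannian geometry, just a direct dimension count. What your approach buys is a more structural viewpoint that would generalize more readily to higher-dimensional affine subspaces in place of $L$, at the cost of importing more machinery than the statement requires.
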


\begin{proof}
    In \cite{avila2007weak} the authors consider Rauzy--Veech matrices $B_{(i)}$  that send the direction vector of $L$ to independent directions deriving a contradiction with the dimension assumption of $E^{cs}$. In fact these matrices are \textit{twisting} matrices arising from the fact that the Rauzy--Veech monoid is Zariski dense. Similarly, if a monoid of matrices is Zariski dense then there exists a finite set of (twisting) matrices $\mathcal{F}$ such that for every $k<d$ and every subspaces $V_1,V_2$ of dimensions $k,d-k$ respectively, there exists an $F\in \mathcal{F}$ such that $F(V_1)\cap V_2=\{0\}$. Since the minus part of the Kontsevich-Zorich cocycle is Zariski dense \cite{belldiagonal}  there exists twisting matrices that send the direction of an affine line $L$ to independent directions, this would imply that the codimension of the central stable space in $H(\pi)$ is at most 1, which contradicts the fact that $dim(E^{cs}([\lambda],\pi))=g+n/2-1>1$ for almost any $[\lambda]$.
\end{proof}
The fact that $dim(E^{cs}([\lambda],\pi))=g+n/2-1$ follows from simplicity and non uniform hyperbolicity of the Rauzy--Veech cocycle.\\

The next result is a specialization of Theorem~3.1 in \cite{avila2007weak}, whose proof carries over verbatim. It shows that non-trivial solutions to \eqref{ergodicnonweakmixing} occur only for a set of parameters of measure zero.

\begin{lemma}[\cite{avila2007weak}] \label{avilac}
    Let \((T,A)\) be a locally constant integral uniform cocycle defined over a generalized simplex where the Hilbert metric is well defined, and let \(\Theta\) be a set adapted to \((T,A)\). If for every \(J \in \mathcal{J}(\Theta)\) we have \(J \cap E^{cs}(x) = \emptyset\) for a.e.\ \(x \in \Delta\), then for every line \(L \subset \mathbb{R}^p\) parallel to some element of \(\Theta\) we have \(L \cap W^s(x) \subseteq \mathbb{Z}^p\) for a.e.\ \(x \in \Delta\).
\end{lemma}

In \cite{belldiagonal}, it is shown that the return times of a flow over the Rauzy--Veech cocycle have exponential tails, which implies that the cocycle is uniform. Combined with Lemma~\ref{AFboundeddistortion}, whose hypotheses are satisfied by the Rauzy--Veech cocycle, this allows us to apply Lemma~\ref{avilac}.

\begin{proof} 
 \textit{[Theorem \ref{theorem2}]}

Let \(\pi\) be a dynamically irreducible permutation on \(d\) intervals such that, for admissible parameters \(\lambda\), the map \(T(\pi,\lambda)\) codes the vertical foliation of a half-translation surface of genus \(g>1\).

Suppose there exists a positive measure set of parameters \(\lambda \in W_\pi\) for which \(T(\pi,\lambda)\) is ergodic but not weakly mixing. For each such \emph{bad parameter} \(\lambda\), there exists a non-trivial measurable function \(f : \overline{X} \to \mathbb{C}\) and a scalar \(t \notin \mathbb{Z}\) such that \(f\) and \(v = (t, \dots, t)\) satisfy Equation~\eqref{ergodicnonweakmixing} for \(T(\pi,\lambda)\).

\medskip
\noindent\textbf{Case 1:} Assume that \((1, \dots, 1) \notin H(\pi)\) for a positive measure subset of bad parameters.
Consider the orthogonal complement \(H(\pi)^\perp\). From the descriptions of \(H^{-}(R,\mathbb{R})\) in Section 7.11 of \cite{belldiagonal} or in \cite{gutierrez2019classification}, one can construct a basis of \(H(\pi)^\perp\) consisting of integer vectors \(w_1, \dots, w_{d + 1 - g - n/2}\).

By Lemma~\ref{veechcriteria} and the fact that the action of the Rauzy--Veech cocycle  on \(H(\pi)^\perp\) \cite{veech1984metric} is isometric, it follows that for any such basis element \(w\) and any vector \(v\) satisfying Equation~\eqref{solution}, we must have \(w \cdot v \in \mathbb{Z}\).

Since \((1, \dots, 1) \notin H(\pi)\), there exists \(w\) in the basis of  \( H(\pi)^\perp\) such that \(w \cdot (1, \dots, 1) \neq 0\), implying
\[
t \, w \cdot (1, \dots, 1) \in \mathbb{Z} \setminus \{0\},
\]
contradicting the assumption \(t \notin \mathbb{Z}\). This contradiction resolves Case~1.

\medskip
\noindent\textbf{Case 2:} Assume now that \((1, \dots, 1) \in H(\pi)\) for a positive measure subset of bad parameters.
We claim that for any open set \(U \subset W_\pi\), for almost every parameter \(\lambda \in W_\pi\) and for any \(h \in H(\pi) \setminus \{0\}\) and \(t \in \mathbb{R}\), either \(t h \in \mathbb{Z}^d\) or
\[
\limsup_{\mathcal{R}^n(\lambda,\pi) \in U \times \{\pi\}}
\|(B^n)^*(\lambda, \pi) \, t h\|_{\mathbb{R}^d/\mathbb{Z}^d} > 0.
\]

For such \(U\), we may choose a cycle \(\gamma_0\) for \(\pi\) with Rauzy--Veech matrix \(B_{\gamma_0}(\pi,\lambda)\) having all entries positive, and construct the generalized simplex \(\Delta_\pi \subseteq U\) as before. We then consider the Rauzy--Veech cocycle over \(\Delta_\pi \times H(\pi)\).

The standard simplex is adapted to this cocycle. By Theorem~\ref{avilaf}, parallel lines to elements of the standard simplex do not intersect the central stable space for a full measure set of parameters \(\lambda \in \Delta_\pi\). By Lemma~\ref{avilac}, the line generated by \(t h\) intersects the weak stable space only in \(H(\pi) \cap \mathbb{Z}^d\) for almost every \(\lambda \in \Delta_\pi\).

This shows that the set of parameters \(\lambda\) for which there exists \(t h \notin \mathbb{Z}^d\) and
\[
\limsup_{\mathcal{R}^n(\lambda,\pi) \in U \times \{\pi\}}
\|(B^n)^*(\lambda, \pi) \, t h\|_{\mathbb{R}^d/\mathbb{Z}^d} = 0
\]
has measure zero.

\medskip
Recall we assumed \(T(\pi, \lambda)\) is ergodic but not weakly mixing. Then there exists \(t \notin \mathbb{Z}\) and \(v = t (1, \dots, 1) \in H(\pi)\) such that there is a solution \(f : \overline{X} \to \mathbb{C}\) to
\[
f(T(x)) = e^{2\pi i v_\alpha} f(x)
\]
for \(x \in X_\alpha \cup X_{\sigma(\alpha)}\).

By the Veech criterion, for generic $\lambda$ such a vector \(v\) must lie in the weak stable space of the Rauzy--Veech cocycle at $\lambda$. The preceding argument shows this is impossible unless \(v \in \mathbb{Z}^d\), contradicting \(t \notin \mathbb{Z}\).

This completes the proof of the theorem.

\end{proof}

\clearpage

\bibliography{bibtex/bib/IEEEexample}
\end{document}